\newtheorem{thm}{Theorem}[subsection]
\newtheorem{lem}{Lemma}[subsection]
\newtheorem{prop}{Proposition}[subsection]
\newtheorem{rk}{Remark}[subsection]
\newtheorem{cor}{Corollary}[subsection]%Numbering is weird with corollaries
\newcommand{\Z}{\mathbb{Z}}
\newcommand{\N}{\mathbb{N}}
\newcommand{\Q}{\mathbb{Q}}
\newcommand{\C}{\mathbb{C}}
\newcommand{\p}{P_\bullet}
\newcommand{\x}{X_{\0}^+}
\newcommand{\osp}{\mathfrak{osp}}
\newcommand{\s}{\mathfrak{sl}}
\newcommand{\Ext}{\operatorname{Ext}}
\newcommand{\Hom}{\operatorname{Hom}}
\newcommand{\atyp}{\operatorname{atyp}}
\newcommand{\core}{\operatorname{core}}
\newcommand{\fg}{\mathfrak{g}}
\newcommand{\fh}{\mathfrak{h}}
\newcommand{\fb}{\mathfrak{b}}
\newcommand{\fp}{\mathfrak{p}}
\newcommand{\ff}{\mathfrak{f}}
\newcommand{\gl}{\mathfrak{gl}}
\newcommand{\F}{\mathcal{F}}
\newcommand{\fv}{\mathcal{V}}
\newcommand{\X}{\mathcal{X}}
\newcommand{\0}{\bar 0}
\newcommand{\1}{\bar 1}
\newcommand{\tf}{\tilde{f}}
\def \dd {D(2,1;\al)}
\def \tq {\ |\ \ }
\def \dd {D(2,1;\alpha)}
\def \gmod {\fg\hbox{-}\mathrm{mod}}
\let \l=\lambda
\let \Ga=\Gamma
\let \a=\alpha
\let \d=\delta
\let \Ga=\Gamma
\let \l=\lambda
\let \eps=\varepsilon
\let \w=\omega
\let\e=\eps
\numberwithin{equation}{subsection}
\begin{document}

\title{Complexity of modules over classical Lie superalgebras}
\author{Houssein El Turkey}

\address{Department of Mathematics and Physics, University of New Haven, 300 Boston Post Road, West Haven, CT 06516, United States}
\email{helturkey@newhaven.edu}
\date{\today}
\maketitle
\begin{abstract}
The complexity of the simple and the Kac modules over the general linear Lie superalgebra $\mathfrak{gl}(m|n)$ of type $A$ was computed by Boe, Kujawa, and Nakano in \cite{BKN1}. A natural continuation to their work is computing the complexity of the same family of modules over the ortho-symplectic Lie superalgebra $\osp(2|2n)$ of type $C$. The two Lie superalgebras are both of \emph{Type I} which will result in similar computations. In fact, our geometric interpretation of the complexity agrees with theirs. We also compute a categorical invariant, $z$-complexity, introduced in \cite{BKN1}, and we interpret this invariant geometrically in terms of a specific detecting subsuperalgebra. In addition, we compute the complexity and the $z$-complexity of the simple modules over the \emph{Type II} Lie superalgebras $\osp(3|2)$, $\dd$, $G(3)$, and $F(4)$.
\end{abstract}

\section{Introduction}

Let $\fg=\fg_{\0}\oplus\fg_{\1}$ be a classical Lie superalgebra (hence $\fg_{\0}$ is a reductive Lie algebra) over the complex numbers, $\C$. Let $\F:=\F_{(\fg,\fg_{\0})}$ be the category of finite-dimensional $\fg$-supermodules which are completely reducible over $\fg_{\0}$. The authors in \cite{BKN2} showed that $\F$ has enough projectives and it satisfies: $(i)$ it is a self-injective category and $(ii)$ every module in this category admits a projective resolution which has a polynomial rate of growth. For a module $M\in \F$, the complexity $c_{\F}(M)$ is the rate of growth of the minimal projective resolution of $M$. %In some sense, the complexity measures how far the module is from being projective.

In this paper we compute the complexity of the simple and the Kac modules for the orthosymplectic Lie superalgebra $\osp(2|2n)$. Let $K(\lambda)$ (resp. $L(\lambda)$) be the Kac (resp. simple) module of highest weight $\lambda$. Let $\atyp(\lambda)$ denote the atypicality of $\lambda$ (see Subsection~\ref{ss:atyp}). For $\osp(2|2n)$, $\atyp(\lambda)$ is either zero or one. For typical $\lambda$ (i.e. $\atyp(\lambda)=0$), the simple and the Kac modules are projective and hence they have a zero complexity. For atypical $\lambda$ (i.e. $\atyp(\lambda)=1$), the complexity is computed in Theorems~\ref{t2:atypcomplexitysimple} and \ref{t1:atypcomplexityKac}:
\[c_{\F}(L(\lambda))=2n+1,\quad c_{\F}(K(\lambda))=2n.\]

These computations can be interpreted geometrically as follows. For a module $M$, let $\X_M$ denote the associated variety defined by Duflo and Serganova \cite {DS}, and $\fv_{(\fg,\fg_{\0})}(M)$ the support variety as defined in \cite{BKN4}. Then, if $X(\lambda)$ is a Kac or a simple module, we have the geometric interpretation of the complexity in Theorem~\ref{ospgeometric1}:
\begin{equation}\label{compinterpret}
 c_{\F}(X(\lambda))=\dim \X_{X(\lambda)}+\dim \fv_{(\fg,\fg_{\0})}(X(\lambda)).
\end{equation}

The authors in \cite{BKN1} introduced a categorical invariant called the $z$-complexity of modules and denoted it by $z_{\F}(-)$ (see \cite[Section~9]{BKN1}). They computed the $z$-complexity of the simple and the Kac modules over $\gl(m|n)$ and then used a detecting subsuperalgebra $\ff$ to interpret their computations geometrically. We carry these computations over to $\osp(2|2n)$ and conclude in Theorems~\ref{t2:zcompsimple} and \ref{t:zcompkac} that for an atypical $\l$, we have
\begin{equation}
z_{\F}(L(\lambda))=2, \quad z_{\F}(K(\lambda))=1.
\end{equation}
Moreover, we show in Theorem~\ref{ospgeometric2} that if $X(\lambda)$ is a Kac or a simple module, we have
\begin{equation}\label{zcompinterpret}
z_{\F}(X(\lambda))=\dim \fv_{(\ff,\ff_{\0})}(X(\lambda)).
\end{equation}

The fact that our geometric interpretations of the complexity and the $z$-complexity agree with the results obtained in \cite{BKN1} was expected since both types $A$ and $C$ are \emph{Type I} Lie superalgebras (Subsection~\ref{ospprelim}). It was interesting to know if these interpretations would hold over \emph{Type II} Lie superalgebras, hence we computed the complexity and the $z$-complexity of the simple (finite-dimensional) modules over $\osp(3|2)$, and the three exceptional Lie superalgebras $\dd$, $G(3)$, and $F(4)$. Our results show that equations~\eqref{compinterpret} and \eqref{zcompinterpret} hold for the simple modules over these Lie superalgebras. The results in this paper raise the question of whether these geometric interpretations will hold over other classical Lie superalgebras, in particular types $B$ and $D$.

The paper is organized as follows. In Section~\ref{s:prel}, we introduce the preliminaries for classical Lie superalgebras and their representations. We recall the definitions of atypicality, complexity, support variety, associated variety, and $z$-complexity of modules. In Section~\ref{s:computecomplexity}, we compute the complexity of simple and Kac modules over $\osp(2|2n)$. We construct an explicit minimal projective resolution of the trivial module and use the fact that simple modules of the same atypicality have the same complexity (\cite[Theorem~4.1.1]{kujawa}). We then use the equivalence of blocks defined in \cite{GS} to compute the complexity of the Kac modules. In Section~4, we relate the support and associated variety of these modules to complexity as mentioned earlier. In Section~5, we compute the $z$-complexity of simple and Kac modules over $\osp(2|2n)$ and show that $z$-complexity will be the same as the dimension of the support variety over a detecting subalgebra, $\ff$, of $\osp(2|2n)$. In Section~\ref{s:additionalexamples} we compute the complexity and the $z$-complexity of the simple modules over $\osp(3|2),\,\dd,\,G(3),$ and $F(4)$. We also show that the same geometric interpretations hold in these cases.\\

\subsection{Acknowledgments} 
I would like to acknowledge my Ph.D. adviser, Jonathan Kujawa, for many insightful discussions. His comments were very helpful throughout this work. I would also like to thank the referee(s) for the useful remarks and valuable input. 

\section{Preliminaries}\label{s:prel}
\subsection{Lie superalgebras and representations}
We will use the notations and conventions developed in \cite{BKN1,BKN2,BKN3,BKN4}. We will work over the complex numbers $\C$ throughout this paper.\\

Let $\fg=\fg_{\0}\oplus\fg_{\1}$ be a Lie superalgebra with a bracket operation $[\,,\,]:\fg\otimes\fg\rightarrow \fg$ which preserves the $\Z_2$-grading and satisfies graded versions of the usual Lie bracket axioms. The subspace $\fg_{\0}$ is a Lie algebra under the bracket and $\fg_{\1}$ is a $\fg_{\0}$-module. Elements of $\fg_{\0}$ (resp. $\fg_{\1}$) are called \emph{even} (resp. \emph{odd}). A finite-dimensional Lie superalgebra $\fg$ is called \emph{classical} if there is a connected reductive algebraic group $G_{\0}$ such that $\operatorname{Lie}(G_{\0})=\fg_{\0}$, and an action of $G_{\0}$ on $\fg_{\0}$ which differentiates to the adjoint action of $\fg_{\0}$ on $\fg_{\1}$. %In particular, if $\fg$ is classical, then $\fg_{\0}$ is a reductive Lie algebra and $\fg_{\1}$ is semisimple as a $\fg_{\0}$-module. 
 A \emph{basic classical} Lie superalgebra is a classical Lie superalgebra with a nondegenerate invariant supersymmetric even bilinear form. The Lie superalgebras considered in this paper are basic classical Lie superalgebras.

We refer the reader to \cite{BKN1} for the definition of the category of $\fg$-supermodules. We write $\F=\F_{(\fg,\fg_{\0})}$ for the full subcategory of all finite-dimensional $\fg$-supermodules which are completely reducible over $\fg_{\0}$. As only supermodules will be considered in this paper, we will from now on use the term ``module" with the understanding that the prefix ``super'' is implicit. 

\subsection{Atypicality}\cite[Section~7.2]{BKN4}\label{ss:atyp}
Let $\fg$ be a basic classical Lie superalgebra with a (dual) Cartan subalgebra $\fh$ $(\fh^*)$. Then $\fh^*$ is equipped with a bilinear form $(,)$. Positive (negative) even and odd roots are defined in \cite{Kac}. Let $\rho$ be half the sum of the positive even roots minus half the sum of the positive odd roots. For $\l\in \fh^*$, define the \emph{atypicality} of $\lambda$ to be the maximal number of pairwise orthogonal positive isotropic roots which are also orthogonal to $\lambda+\rho$ with respect to the form $(,)$. We will write $\atyp(\lambda)$ for the atypicality of $\lambda$. In the Lie superalgebras considered in this paper, the atypicality of a weight is either zero (typical) or one (atypical). If $\gamma$ is the positive isotropic root orthogonal to $\lambda+\rho$, we say $\lambda$ is atypical with respect to $\gamma$.

Define the atypicality of a simple $\fg$-module $L(\lambda)$ of highest weight $\lambda$ by $\atyp(L(\lambda)):=\atyp(\lambda)$. It is known that the atypicality of a simple module is the same for all simple modules in a given block. Hence it makes sense to refer to the atypicality of a block.  
\subsection{Complexity}\cite[Section~2.2]{BKN1}
Let $\{V_t\,|\, t \in \N\} = \{V_\bullet\}$ be a sequence of finite-dimensional $\C$-vector spaces. The rate of growth of $V_\bullet$, $r(V_\bullet)$, is the smallest nonnegative integer c such that there exists a constant $C > 0$ with $\dim V_t \leq C\cdot t^{c-1}$ for all $t$. If no such integer exists
then $V_\bullet$ is said to have infinite rate of growth.

Let $M\in \F$ and $\p\twoheadrightarrow M$ be a minimal projective resolution. Define the \emph{complexity} of $M$ to be $c_{\F}(M):=r(\p)$. As shown in \cite[Theorem~2.5.1]{BKN2} the complexity is always finite, in particular if $M$ is an object of $\F$, then $c_{\F}(M)\leq \dim \fg_{\1}$. Moreover, \cite[Proposition~2.8.1]{BKN2} provides a characterization of the complexity via rates of growth of extension groups in $\F$:
\[c_{\F}(M)=r\Big(\Ext_{(\fg,\fg_{\0})}^\bullet(M,\bigoplus S^{\dim P(S)})\Big),\]
where the sum is over all the simple modules $S\in \F$, and $P(S)$ is the projective cover of $S$. Here and elsewhere, we write $\Ext_{(\fg,\fg_{\0})}^\bullet(M,N)$ for the relative cohomology for the pair $(\fg,\fg_{\0})$ as introduced in \cite[Section~2.3]{BKN3}. In some sense, the complexity of a module measures how far the module is from being projective. For example, by \cite[Corollary~2.7.1]{BKN2}, $c_{\F}(M)=0$ if and only if $M$ is projective.
 
\subsection{Support variety}\cite[Section~6]{BKN3}\label{suppvar}
 Let $R=H^{\bullet}(\fg,\fg_{\0};\C)$ be the cohomology ring of $\fg$ and let $M\in \F$. According to \cite[Theorem~2.7]{BKN3}, $\Ext_{\F}^{\bullet}(M,M)$ is a finitely generated $R$-module. Set $J:=Ann_R(\Ext_{\F}^{\bullet}(M,M))$. The \emph{support variety} of $M$ is defined by
 \[\fv_{(\fg,\fg_{\0})}(M):=MaxSpec(R/J).\]

 \subsection{Associated variety}\cite[Section~2]{DS}\label{assocvar}
 Let $\X=\{x\in \fg_{\1} \mid [x,x]=0\}$. If $M \in \F$, then Duflo and Serganova \cite{DS} defined an \emph{associated variety} of $M$ which is equivalent to:
\[\X_M=\{x\in \X \mid \text{$M$ is not projective as a $U(\langle x \rangle)$-module}\}\cup \{0\},\]
where $U(\langle x \rangle)$ denotes the enveloping algebra of the Lie superalgebra generated by $x$.

\subsection{$z$-complexity}(\cite[Section~9]{BKN1})\label{zcompdef}
Let $M\in \F$. The $z$-\emph{complexity} of $M$ is
\[z_{\F}(M):=r\Big(\Ext_{(\fg,\fg_{\0})}^\bullet(M,\bigoplus S)\Big),\]
where the direct sum runs over all simple modules $S\in\F$. Unlike complexity, $z_{\F}(-)$ has the advantage of being invariant under category equivalences.

If $\p\twoheadrightarrow M$ is a minimal projective resolution of $M$, define $s(\p)$ to be the rate of growth of the number of indecomposable summands at each step in the resolution. We can easily show that $z_{\F}(M)=s(\p)$.

\section{Computing the complexity for $\osp(2|2n)$}\label{s:computecomplexity}

 \subsection{The Lie superalgebra $\osp(2|2n)$}\label{ospprelim}

 Consider the matrix realization of $\osp(2|2n)$ given in \cite{Kac}:
\[
\fg=\left[\begin{array}{c|c}
\begin{array}{cc}
\alpha & 0\\
0 & -\alpha\\
\end{array}  & \begin{array}{cc}
x_1 & x_2\\
y_2 & y_1\\
\end{array}\\ \hline
\begin{array}{cc}
y_1^t & x_2^t\\
-y_2^t & -x_1^t\\
\end{array} & \begin{array}{cc}
a & b\\
c & -a^t\\
\end{array}\\
\end{array}
\right],
\]
where $x_1,\,x_2,\,y_1,\,y_2$ are $1\times n$ matrices; $a,\,b,\,c$ are $n\times n$ matrices with $b$ and $c$ being symmetric; $\alpha$ is a scalar in $\C$. The diagonal blocks form the even part $\fg_{\0}\cong \C\oplus \mathfrak{sp}(2n)$ while the anti-diagonal blocks form the odd part $\fg_{\1}$. The Lie super-bracket is defined by
 \[[A,B]=AB-(-1)^{\bar{A}\bar{B}}BA,\]
 for homogeneous elements $A,\,B\in \fg_{\0}$ or $\fg_{\1}$. We then extend the definition of the bracket to all of $\fg$ by bilinearity.

The Cartan subalgebra $\fh$ is chosen to be the set of diagonal matrices in $\fg$. Let $\varepsilon_1:\fh\rightarrow \C$ be the linear map that takes an element of $\fh$ to its first diagonal entry. For $1\leq i \leq n$, let $\delta_i:\fh\rightarrow \C$ be the linear map that takes an element of $\fh$ to the $i$th diagonal entry of the matrix in the second diagonal block. The set $\{\varepsilon_1,\,\delta_1,\,\delta_2,\ldots,\delta_n \}$ forms a basis of $\fh^*$ which is endowed with a nondegenerate symmetric bilinear form $(\,,\,)$ given by
\[
(\varepsilon_1,\varepsilon_1)=1,\quad (\delta_i,\delta_j)=-\delta_{ij},\quad (\varepsilon_1,\delta_i)=0,
\]
for all $1\leq i,j \leq n$. The set of simple roots is
\[
\Delta=\{\delta_i-\delta_{i+1}\mid 1\leq i\leq n\}\cup\{2\delta_n,\,\varepsilon_1-\delta_1\}.
\]
The even roots of $\fg$ are:
\[
\Phi_{\0}=\{\pm\delta_i\pm\delta_j\mid 1\leq i\neq j\leq n\}\cup\{\pm2\delta_i\mid 1\leq i \leq n\},
\]
while the odd roots are
\[
\Phi_{\1}=\{\pm\varepsilon_1\pm\delta_i\mid 1\leq i \leq n\}.
\]
The positive roots of $\fg$ are
\[
\Phi^{+}=\{\delta_i+\delta_j\mid 1\leq i\leq j\leq n\}\cup\{\delta_i-\delta_j\mid 1\leq i<j \leq n\}\cup\{\varepsilon_1\pm\delta_i\mid 1\leq i \leq n\}.
\]

Note that $\fg$ has a $\Z$-grading given by $\fg=\fg_{-1}\oplus\fg_{0}\oplus\fg_{1}$ where $\fg_{0}$ is the subalgebra spanned by the even root vectors and $\fg_{1}$ (resp. $\fg_{-1}$) is the subalgebra spanned by the odd positive (resp. negative) root vectors. Thus $\fg$ is a \emph{Type I} Lie superalgebra. Note that $\fg_{-1}$ and $\fg_1$ are abelian, $\fg_{\1}=\fg_{-1}\oplus \fg_{1}$, and $\fg_{\0}=\fg_0$. Let $\fb$ be the Borel subalgebra of $\fg$ spanned by the positive root vectors, then we have $\fb=\fb_{\0}\oplus\fg_1$ where $\fb_{\0}$ is spanned by the even positive roots.

The simple modules over $\fg=\osp(2|2n)$ can be constructed as follows. Let $\x\subseteq\fh^*$ be the parameterizing set of highest weights for the simple finite-dimensional $\fg_{\0}$-modules with respect to the pair $(\fh,\,\fb_{\0})$. An explicit description of $\x$ is
\[\x=\{\lambda=\lambda_{-1}\varepsilon_1+\sum_{i=1}^{n}\lambda_i\delta_i \mid \lambda_{-1}\in \C,\,\lambda_i\in \Z,\, \forall i\geq 1;\,\,\lambda_1\geq \lambda_2\geq \ldots \geq \lambda_n\geq 0\}.\]

%By \cite[section~21.1]{HUM}, we can show that the simple finite-dimensional $\mathfrak{sp}(2n)$-modules have integral weights.

For $\lambda\in \x$, let $L_{\0}(\lambda)$ be the simple finite-dimensional $\fg_{\0}$-module of highest weight $\lambda$. Since $\osp(2|2n)_{\0}\cong \C\oplus \mathfrak{sp}(2n)$, the simple $\fg_{\0}$-modules are of the form
$L_{\0}(\lambda)=\C_{\lambda_{-1}}\boxtimes L_0(\lambda)$, where $L_0(\lambda)$ is the simple $\mathfrak{sp}(2n)$-module of weight $\sum_{i=1}^{n}\lambda_i\delta_i$. Note the slight abuse of notation where we used $\l$ for the simple $\mathfrak{sp}(2n)$-module while we removed the $\l_{-1}$-part.

Set $\fp^{+}=\fg_{\0}\oplus\fg_{1}$. Since $\fg_{1}$ is an abelian ideal of $\fp^{+}$, view $L_{\0}(\lambda)$ as a simple finite-dimensional $\fp^{+}$-module via inflation. For $\lambda \in X_0^+$, the \emph{Kac module} $K(\lambda)$ is defined by:
\[K(\lambda):=U(g)\otimes_{U(\fp^+)}L_{\0}(\lambda).\]

The Kac module $K(\lambda)$ has a unique maximal submodule. The head of $K(\lambda)$ is the simple finite-dimensional $\fg$-module $L(\lambda)$. The set $\{L(\lambda)\mid \lambda\in X_0^+\}$ is a complete set of non-isomorphic simple modules in $\F=\F_{(\fg,\fg_{\0})}$. 

From now on, we denote by $P(\l)$ the projective cover of $L(\l)$. By \cite[Theorem~1]{Kac1} we know that if $\atyp(\lambda)=0$, then $P(\lambda)=L(\lambda)=K(\lambda)$ and hence:
\begin{prop}\label{typcomplexity}
	If $\lambda$ is typical, then $c_{\F}(L(\lambda))=c_{\F}(K(\lambda))=0$.
\end{prop}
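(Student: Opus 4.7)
The plan is to assemble the statement from two facts already recalled in the excerpt, so the proof should be essentially a two-line invocation rather than any new computation. First, I would invoke \cite[Theorem~1]{Kac1}, which tells us that if $\atyp(\lambda)=0$ then the simple, Kac, and projective cover modules coincide: $L(\lambda)=K(\lambda)=P(\lambda)$. In particular both $L(\lambda)$ and $K(\lambda)$ are projective objects in $\F$.

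Second, I would apply the characterization of complexity zero given just above the statement, namely \cite[Corollary~2.7.1]{BKN2}: a module $M\in\F$ satisfies $c_{\F}(M)=0$ if and only if $M$ is projective. (Equivalently, a projective module $M$ admits the minimal projective resolution $\cdots\to 0\to M\xrightarrow{\operatorname{id}}M\to 0$, so $r(\p)=0$ by definition.) Applying this to $L(\lambda)$ and $K(\lambda)$ yields $c_{\F}(L(\lambda))=c_{\F}(K(\lambda))=0$.

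There is no genuine obstacle here: all of the substantive content is packaged into the two cited results, and the proposition functions mainly as a convenient baseline so that subsequent theorems can focus on the atypical case $\atyp(\lambda)=1$, where $L(\lambda)$ and $K(\lambda)$ are no longer projective and the complexity calculations of Theorems~\ref{t2:atypcomplexitysimple} and \ref{t1:atypcomplexityKac} become nontrivial. If anything, the only thing worth being careful about is to verify that Kac's theorem applies to $\osp(2|2n)$ in the form stated (which it does, since $\osp(2|2n)$ is basic classical and Type I, so the Kac module construction is defined and Kac's typical-equals-projective result holds).
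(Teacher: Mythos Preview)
Your proposal is correct and matches the paper's approach exactly: the paper does not even write out a formal proof, since the proposition is placed immediately after recalling both \cite[Theorem~1]{Kac1} (giving $P(\lambda)=L(\lambda)=K(\lambda)$ for typical $\lambda$) and \cite[Corollary~2.7.1]{BKN2} (projective $\Leftrightarrow$ complexity zero), so it is meant to be read as an immediate consequence of those two facts.
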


\begin{rk}
In $\osp(2|2n)$, we have $\rho=-n\varepsilon_1+\sum_{i=1}^{n}(n-i+1)\delta_i$. Using $\rho$ we can show that for any $\l\in \x$, $\atyp(\lambda)$ is either zero or one. Note that if $\l=\lambda_{-1}\varepsilon_1+\sum_{i=1}^{n}\lambda_i\delta_i=(\l_{-1}|\l_1,\ldots,\l_n)$ is atypical, then $\l_{-1}\in  \Z$. We also note that the zero weight corresponding to the trivial module $L(0)=\C$ has $\atyp(0)=1$. 
 \end{rk}

\subsection{Complexity of simple modules for $\fg=\osp(2|2n)$}\label{SS:simplecomplexity}

 For a typical $\l$, the simple module $L(\l)$ is projective and hence has zero complexity. We only need to consider the case when $\lambda$ is atypical. In this section, we will refer to \cite[Sections~1.2,\,3.1]{CWZ} to get a description of the projective covers.

Let $W$ be the Weyl group of $\fg$ which, by definition, is generated by the reflections corresponding to the even simple roots of $\fg$. If $\lambda\in \x$ is atypical with respect to the odd positive root $\gamma$, the authors in \cite{CWZ} defined an ``$L$-operator" given by:
\[\lambda^L:=\omega(\lambda+\rho-k\gamma)-\rho,\]
where $k$ is the smallest positive integer such that $(\lambda+\rho-k\gamma,\alpha_i)\neq 0$ for all the even simple roots $\alpha_i$ and $\omega$ is the unique element in the Weyl group of $\mathfrak{sp}(2n)$ rendering $\lambda^L$ dominant. Given an atypical $\lambda\in \x$, we shall write

\begin{equation}\label{eq1:inductivepower}
\lambda^{(0)}=\lambda,\quad \lambda^{(l+1)}=(\lambda^{(l)})^L,\quad l\geq 0.
\end{equation}

Let $\omega_0$ be the longest element in the Weyl group of $\mathfrak{sp}(2n)$ and let $\beta=2n\varepsilon_1$ be the sum of all the odd positive roots. We can use \cite[equation~(4)]{CWZ} to get:
\[\lambda=\omega_0^{-1}\Big(\beta-(\beta-\omega_0\lambda^L)^L\Big),\]
which proves that the $L$-operator is one-to-one. We will prove that $L$ is onto on the set of atypical weights in $\x$ using a representation theoretical approach (Proposition~\ref{Lsurjective}). Using this we can define $\lambda^{-L}$ to be the unique weight $\mu$ such that $\mu^L=\lambda$, hence we can extend the definition in equation~\eqref{eq1:inductivepower} to any $l\in \Z$.

To compute $\lambda^L$ for each atypical $\lambda$, we need the $(n+1)$-tuple $f_\lambda:=((f_\lambda)_{-1}|(f_\lambda)_{1},\cdots,(f_\lambda)_{n})$ of integers introduced in \cite[Section~1.2]{CWZ}:
\[(f_\lambda)_{-1}=(\lambda+\rho,\varepsilon_1),\quad (f_\lambda)_i=(\lambda+\rho,\delta_i),\,i\geq 1.\]
 Since $\l$ is atypical, then  $|(f_\l)_{-1}|=-(f_\l)_i$ for some $i\geq 1$. Set $f_\l^L:=f_{\lambda^L}$. Using the description of $f_\l^L$ given in \cite[Section~1.2]{CWZ}, the following computations can be performed.

\begin{lem}\label{t1:projcover(2,2n)}
We have:
\begin{enumerate}
\item For $d\geq 0$, $(-d|d,0,\ldots,0)^L=(-d-1|d+1,0,\dots,0)$.
\item For $d\geq 1$, $(2n+d|d,0,\ldots,0)^L=(2n+(d-1)|d-1,0,\ldots,0)$.
\item  $(2n|0,\dots,0)^L=(0|0,\ldots,0)$.
\item For $\l=(0|0,\ldots,0)=0$ and $d\geq 0$ we have
\[\l^{(d)}=(-d|d,0,\ldots,0)\,\, \text{and}\,\, \l^{(-d-1)}=(2n+d|d,0,\ldots,0).\]
\end{enumerate}
\end{lem}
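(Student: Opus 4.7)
The plan is to transport each claim into the combinatorial parameter $f_\lambda \in Y_+^{1|n}$ via the bijection $\lambda \mapsto f_\lambda$, apply the explicit recipe for $f^L$ from \cite[Section~1.2]{CWZ}, and then translate back. Using $\rho = -n\varepsilon_1 + \sum_{i=1}^{n}(n-i+1)\delta_i$ together with the bilinear form on $\fh^*$, one computes $f_\lambda = (-d-n \mid -d-n, -(n-1), \ldots, -1)$ in (1); $f_\lambda = (n+d \mid -(n+d), -(n-1), \ldots, -1)$ in (2); and $f_\lambda = (n \mid -n, -(n-1), \ldots, -1)$ in (3). In every case the atypical match occurs at the first $\delta$-slot and the tail entries $f_2,\ldots,f_n = -(n-1),\ldots,-1$ are frozen, so only the pair $(f_{-1},f_1)$ is moved by $L$.

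For the two generic cases (1) and (2), the CWZ rule decreases the absolute value of $f_{-1}$ by one (preserving its sign) and updates $f_1$ correspondingly. This sends $(-d-n\mid -d-n,\ldots)$ to $(-d-n-1\mid -d-n-1,\ldots)$ and $(n+d\mid -(n+d),\ldots)$ to $(n+d-1\mid -(n+d-1),\ldots)$; translating back through the inverse bijection yields precisely $(-d-1|d+1,0,\ldots,0)$ and $(2n+d-1|d-1,0,\ldots,0)$, as asserted. For (3) the value $|f_{-1}|=n$ lands in the boundary sub-case of the CWZ recipe where the sign of $f_{-1}$ is flipped rather than decremented, sending $(n\mid -n,\ldots)$ to $(-n\mid -n,\ldots) = f_0$, so that $(2n|0,\ldots,0)^L = 0$.

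With (1)--(3) in hand, part (4) is immediate by induction on $|d|$. Iterating (1) from $\lambda = 0$ gives $\lambda^{(d)} = (-d|d,0,\ldots,0)$ for every $d \geq 0$; part (3) gives $\lambda^{(-1)} = (2n|0,\ldots,0)$, and iterating (2) then yields $\lambda^{(-d-1)} = (2n+d|d,0,\ldots,0)$ for all $d \geq 1$. Taking the inverse $L^{-1}$ is legitimate here because $L$ is injective by \cite[equation~(4)]{CWZ} and surjective on atypical weights by Proposition~\ref{Lsurjective}.

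The main obstacle is case (3). If one prefers to compute from the raw definition $\lambda^L = \omega(\lambda+\rho-k\gamma)-\rho$, this is the one part where the minimal $k$ rendering $\lambda+\rho-k\gamma$ regular is not $1$ but $2n$ (all smaller $k$ produce a repeated $\delta$-coordinate), and where a nontrivial Weyl group element---the sign change on $\delta_1$---is needed to return the shifted weight to the dominant chamber. Verifying that CWZ's combinatorial recipe places exactly the configuration $|f_{-1}| = n$ in its branch-switching sub-case, and that this agrees with the direct formula, is what turns an otherwise routine calculation into something that needs genuine care.
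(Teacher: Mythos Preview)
Your approach is exactly the paper's: translate to $f_\lambda$, apply the combinatorial description of $f^L$ from \cite[Section~1.2]{CWZ}, and translate back; indeed the paper's own proof consists only of the remark that these computations can be carried out from that reference. One small slip to correct: you describe the generic rule uniformly as ``decreasing $|f_{-1}|$ by one (preserving its sign),'' but your own computation for case~(1) correctly sends $-d-n\mapsto -d-n-1$, which \emph{increases} the absolute value---the output you write is right, only the verbal summary of the rule for the negative-$f_{-1}$ branch is off. Also, for part~(4) injectivity of $L$ (already established via \cite[equation~(4)]{CWZ}) suffices: parts~(2)--(3) give $(2n+d|d,0,\ldots,0)^{(d+1)}=0$, and injectivity then forces $\lambda^{(-d-1)}=(2n+d|d,0,\ldots,0)$, so the forward reference to Proposition~\ref{Lsurjective} is unnecessary.
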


 From \cite[Theorem~7]{CWZ}, we have the following 2-step Kac flag:
 \begin{equation}\label{KacFlag}
0\rightarrow K(\lambda)\rightarrow P(\lambda^L)\rightarrow K(\lambda^L)\rightarrow 0,
\end{equation}
which implies that $\dim P(\lambda^L)=\dim K(\lambda)+\dim K(\lambda^L)$. Moreover, using \cite[Corollary~8]{CWZ}, there is a short exact sequence:
\[0\rightarrow L(\lambda^L)\rightarrow K(\lambda)\rightarrow L(\lambda) \rightarrow 0,\]
which shows that the Kac module has two layers: the head of $K(\lambda)$ is $L(\lambda)$ and the socle is $L(\lambda^L)$. Note that by the Kac filtration of $P(\l^L)$ and the composition factors of $K(\l)$ we know that the socle of $P(\l^L)$ has only one submodule, namely $L(\l^L)$. Similarly the head of $P(\l^L)$ is the head of $K(\l^L)$ which is $L(\l^L)$. Using the description of Kazhdan-Lusztig polynomials in \cite[Theorem~5, Remark~1]{CWZ}, we can show  that
\[\dim \Ext^1(L(\l),L(\mu))=1\Leftrightarrow \l=\mu^L\,\,\text{or}\,\, \l^L=\mu.\]
Thus there is no indecomposable module $M$ such that the following sequence is exact:
 \[0\rightarrow L(\lambda)\rightarrow M\rightarrow L(\lambda^{LL}) \rightarrow 0.\]
Therefore, the projective module $P(\lambda^L)$ has the following layer structure:

\[\begin{tikzpicture}

  \node (n1) at (1,1) {$L(\l^L)$};
  \node (n2) at (0,0)  {$L(\l)$};
  \node (n3) at (2,0)  {$L(\l^{LL})$};
  \node (n4) at (1,-1) {$L(\l^L)$};

  \foreach \from/\to in {n1/n2,n1/n3,n2/n4,n3/n4}
    \draw (\from) -- (\to);

\end{tikzpicture}\]

To compute the complexity, we need the following bounds on the dimension of the simple $\mathfrak{sp}(2n)$-module $L_0(r,0,\ldots,0)$.  
\begin{lem}\label{lem:bddimsimple}
If $r=0$, $\dim L_0(r,0,\ldots,0)=1$. If $r\geq 1$, there are positive constants $C$ and $C'$ that depend only on $n$ such that
\[Cr^{2n-1}\leq \dim L_0(r,0,\ldots,0)\leq C'r^{2n-1}.\]
\end{lem}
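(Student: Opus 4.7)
The plan is to invoke the Weyl dimension formula for $\mathfrak{sp}(2n)$ applied to the simple module of highest weight $r\delta_1$, and then simply count the factors that depend on $r$. Writing $\rho_0=\sum_{i=1}^{n}(n-i+1)\delta_i$ for the half-sum of the positive roots of $\mathfrak{sp}(2n)$, the Weyl formula gives
\[\dim L_0(r\delta_1)\;=\;\prod_{\a\in\Phi_0^+}\frac{(r\delta_1+\rho_0,\,\a)}{(\rho_0,\,\a)},\]
where $\Phi_0^+=\{\delta_i-\delta_j:i<j\}\cup\{\delta_i+\delta_j:i<j\}\cup\{2\delta_i:1\le i\le n\}$, a total of $n^2$ positive roots.

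Next I would single out the factors that depend on $r$: these are precisely those indexed by positive roots having a nonzero $\delta_1$-component, namely $\delta_1\pm\delta_j$ for $j=2,\ldots,n$ together with $2\delta_1$, giving exactly $2(n-1)+1=2n-1$ factors. Each such factor has the form $(r+k_\a)/m_\a$ (with an extra factor of $2$ in the case $\a=2\delta_1$), where $k_\a$ and $m_\a$ are positive integers depending only on $n$; in particular each is linear in $r$ with positive leading coefficient and is strictly positive for every $r\ge 0$. All remaining factors, both in the numerator and in the denominator, are fixed positive rational constants depending only on $n$. Multiplying everything together shows that $\dim L_0(r,0,\ldots,0)$ is a polynomial $p_n(r)$ in $r$ of degree exactly $2n-1$, with positive leading coefficient $c_n>0$ that depends only on $n$.

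From the asymptotic $p_n(r)\sim c_n r^{2n-1}$ as $r\to\infty$, the bound $Cr^{2n-1}\le p_n(r)\le C'r^{2n-1}$ holds for all $r$ beyond some threshold $r_0(n)$ for any choice $C<c_n<C'$. One then enlarges $C'$ and shrinks $C$ if necessary to accommodate the finitely many remaining values of $r\ge 1$, using that $p_n(r)\ge 1$ throughout; the case $r=0$ is automatic since $r^{2n-1}=0$ there.

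There is no real obstacle here: the argument is routine bookkeeping once the Weyl dimension formula is invoked. The only check worth making explicit is that the $r$-dependent factor count is exactly $2n-1$, matching the claimed exponent, and that no accidental cancellation lowers the degree, which is clear because every $r$-dependent numerator factor has the form $r+(\text{positive integer})$ while the corresponding denominator factors are $r$-independent nonzero constants.
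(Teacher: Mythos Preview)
Your proposal is correct and follows essentially the same approach as the paper: both invoke the Weyl dimension formula for $\mathfrak{sp}(2n)$, identify that exactly $2n-1$ numerator factors depend on $r$ (each linearly with positive leading coefficient), and conclude the two-sided bound. The only cosmetic difference is that the paper writes out the explicit product $\dim L_0(r,0,\ldots,0)=C(2n+2r)\prod_{j=2}^n(r+j-1)(2n+r-j+1)$ and bounds each factor above and below by explicit multiples of $r$, whereas you argue more abstractly via the asymptotic $p_n(r)\sim c_n r^{2n-1}$ and then adjust constants to cover small $r$; both routes are equally valid and equally elementary.
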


\begin{proof}
Let $r\geq 1$. Let $\delta$ be half the sum of the positive roots in $\mathfrak{sp}(2n)$, then $\delta=\displaystyle{\sum_{i=1}^{n}(n-i+1)\delta_i}$. By the Weyl-dimension formula (\cite[Section~24.3]{HUM}) we have
\[\dim L_0(r,0,\ldots,0)=\frac{(2n+2r)\prod_{j=2}^n(r+j-1)(2n+r-j+1)}{(2n)\prod_{j=2}^n(j-1)(2n-j+2)}.\]
Let
\[
C=\frac{1}{(2n)\prod_{j=2}^n (j-1)(2n-j+2)}.
\]
 Then $C$ is a positive constant depending only on $n$ and $\dim L_0(r,0,\ldots,0)$ is a polynomial in $r$ of degree $2n-1$ with a positive leading coefficient. Moreover,
\begin{align*}
\dim L_0(r,0,\ldots,0)&=C(2n+2r)\prod_{j=2}^n(r+j-1)(2n+r-j+1)\\
                      &\geq C(2n+2r)(r+1)^{n-1}(n+r+1)^{n-1} \geq Cr^{2n-1}.
\end{align*}
On the other hand,
\begin{align*}
\dim L_0(r,0,\ldots,0)&=C(2n+2r)\prod_{j=2}^n(r+j-1)(2n+r-j+1)\\
                      &\leq C(2n+2r)(r+n-1)^{n-1}(2n+r-1)^{n-1}.\\
\end{align*}
Let us pick positive constants $C_2,C_3,C_4$ depending only on $n$ such that
\[2n+2r\leq C_2r,\quad r+n-1\leq C_3r,\quad 2n+r-1\leq C_4r,\]
then
\[\dim L_0(r,0,\ldots,0)\leq C \cdot C_2\cdot (C_3)^{n-1}(C_4)^{n-1}r^{2n-1}=C'r^{2n-1},\]
where $C'$ is a positive constant that depends only on $n$.
\end{proof}

\begin{thm}\label{t2:atypcomplexitysimple}
For atypical $\lambda\in \x$, $c_{\F}(L(\lambda))=2n+1$.
\end{thm}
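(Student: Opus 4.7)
The plan is to reduce the general statement to the case $\lambda=0$ and then compute the complexity of the trivial module by building its minimal projective resolution explicitly. First I would invoke \cite[Theorem~4.1.1]{kujawa}, which asserts that in $\F$ any two simple modules of the same atypicality have equal complexity; since every atypical weight in $\x$ has atypicality $1$, it suffices to compute $c_{\F}(L(0))$, where $L(0)=\C$ is the trivial module.

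Lemma~\ref{t1:projcover(2,2n)} enumerates the atypical weights in the principal block as $0^{(d)}=(-d|d,0,\ldots,0)$ for $d\geq 0$ and $0^{(-d-1)}=(2n+d|d,0,\ldots,0)$ for $d\geq 0$. Combined with the diamond layer structure of each $P(\mu)$ (head $L(\mu)$, middle layer $L(\mu^{-L})\oplus L(\mu^L)$, socle $L(\mu)$) and the fact, recalled above, that $\dim\Ext^1(L(\lambda),L(\mu))=1$ exactly when $\mu=\lambda^{\pm L}$, the block resembles the principal block of $\gl(1|1)$ and admits a zigzag-type resolution. I would prove by induction on $k\geq 1$ that a minimal projective resolution $\p\twoheadrightarrow L(0)$ is given by
\[P_0=P(0),\qquad P_k=\bigoplus_{\substack{|j|\leq k\\ j\equiv k\,(2)}} P(0^{(j)}).\]
In the inductive step, each interior weight $L(0^{(j)})$ with $|j|<k$ receives two contributions, one from the middle layer of $P(0^{(j-1)})$ and one from the middle layer of $P(0^{(j+1)})$; an appropriate linear combination surjects onto the corresponding composition factor of the $k$th syzygy $\Omega_k(L(0))$, while the complementary combination persists as a generator of $\Omega_{k+1}(L(0))$. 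The extremal weights $L(0^{(\pm(k+1))})$ appear as fresh generators. Minimality is automatic since $\Ext^1$ only connects adjacent iterates of the $L$-operator.

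For the dimension count, the 2-step Kac flag gives $\dim P(0^{(j)})=\dim K(0^{(j-1)})+\dim K(0^{(j)})$, while PBW gives $\dim K(\mu)=2^n\dim L_{\0}(\mu)$. Since the $\mathfrak{sp}(2n)$-component of $L_{\0}(0^{(j)})$ is $L_0(|j|,0,\ldots,0)$, Lemma~\ref{lem:bddimsimple} yields positive constants $C,C'$ depending only on $n$ with $C|j|^{2n-1}\leq \dim P(0^{(j)})\leq C'|j|^{2n-1}$ for large $|j|$. Summing over the $k+1$ summands of $P_k$ gives $\dim P_k\asymp k^{2n}$, so $r(\p)=2n+1$, proving the theorem.

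The main obstacle will be verifying the explicit description of $P_k$ in the inductive step: one must rule out extra indecomposable summands (for instance, duplicate copies or projective covers coming from the socle layers of the previous $P(0^{(j')})$) and confirm that the antisymmetric cancellation produces exactly the claimed head of each successive syzygy. Once the shape of the resolution is pinned down, the dimension estimate and rate-of-growth computation are routine.
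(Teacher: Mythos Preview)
Your proposal is correct and follows essentially the same route as the paper: reduce to the trivial module via \cite[Theorem~4.1.1]{kujawa}, build the explicit minimal projective resolution from the diamond structure of the $P(0^{(j)})$ (your formula $P_k=\bigoplus_{|j|\le k,\;j\equiv k\,(2)}P(0^{(j)})$ coincides with the paper's description of $P_d$), and then estimate $\dim P_k$ using the Kac filtration together with Lemma~\ref{lem:bddimsimple}. Two harmless slips to fix before writing it up: PBW gives $\dim K(\mu)=2^{\dim\fg_{-1}}\dim L_{\0}(\mu)=2^{2n}\dim L_{\0}(\mu)$ rather than $2^n$, and for $j<0$ the $\mathfrak{sp}(2n)$-component of $L_{\0}(0^{(j)})$ is $L_0(|j|-1,0,\dots,0)$ rather than $L_0(|j|,0,\dots,0)$; neither affects the growth rate.
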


\begin{proof}
First, we find the complexity of the trivial module $\C=L(0|0,\ldots,0)$. For $\l\in \x$, the definition of $\l^{(i)}$ was given in equation~\eqref{eq1:inductivepower}. For $0=(0|0,\ldots,0)$ and $i\in \Z$ we have by Lemma~\ref{t1:projcover(2,2n)}:
\[0^{(i)}=(-i|i,0,\ldots,0)\,\, \text{and}\,\, 0^{(-i-1)}=(2n+i|i,0,\ldots,0).\]
For $i\in \Z$, set
$$[i]=L(0^{(i)}),\quad P(i)=P(0^{(i)}),\,\,\text{and}\,\, K(i)=K(0^{(i)}).$$
 Using these notations, the projective cover $P(i)$ has the following radical layer structure:
\[\begin{tikzpicture}

  \node (n1) at (1,1) {$[i]$};
  \node (n2) at (0,0)  {$[i-1]$};
  \node (n3) at (2,0)  {$[i+1]$};
  \node (n4) at (1,-1) {$[i]$};

  \foreach \from/\to in {n1/n2,n1/n3,n2/n4,n3/n4}
    \draw (\from) -- (\to);

\end{tikzpicture}\]
The minimal projective resolution of $L(0)$ is
\begin{equation}\label{projrestrivial}
 \cdots P_d \stackrel{f_d}{\rightarrow} \cdots \stackrel{f_1}{\rightarrow}P_0= P(0)\stackrel{f_0}{\rightarrow} L(0) \rightarrow 0,
\end{equation}
where the $d$th term in this resolution is given as follows. If $d$ is even,
\begin{align*}
P_d=P(-d)\oplus P(-d+2)\oplus\cdots \oplus P(-2)\oplus P(0)\oplus P(2)\oplus \cdots \oplus P(d-2)\oplus P(d),
\end{align*}
where the kernel of $f_d$ is
\[Ker(f_d)=\raisebox{-3em}{\begin{tikzpicture}
	
	\node (n1) at (1,1) {$[-d-1]$};
	\node (n2) at (2,0)  {$[-d]$};
	\node (n3) at (3,1)  {$[-d+1]$};
	\node (n4) at (4,0)  {$\cdots$};
	\node (n5) at (5,1) {$[-1]$};
	\node (n6) at (6,0)  {$[0]$};
	\node (n7) at (7,1)  {$[1]$};
	\node (n8) at (8,0)  {$\cdots$};
	\node (n9) at (9,1) {$[d-1]$};
	\node (n10) at (10,0)  {$[d]$};
	\node (n11) at (11,1)  {$[d+1]$};
	
	\foreach \from/\to in {n1/n2, n2/n3, n3/n4, n4/n5, n5/n6, n6/n7, n7/n8, n8/n9, n9/n10, n10/n11}
	\draw (\from) -- (\to);
	
	\end{tikzpicture}}\]

If $d$ is odd, $P(\pm 2)$ are replaced by $P(\pm 1)$, and $P(0)$ is removed from $P_d$. The kernel of $f_d$ is described similarly. An inductive argument proves this result using the diagrammatic method for modular representations given in \cite{BC}. In particular, we use the description given in \cite[Section~10.3]{BC} for the kernel of the surjective map $P(M) \twoheadrightarrow M$ where $P(M)$ is the projective cover of $M\in \F$. %We also use the fact that $P(M)$ is the direct sum of the projective covers of the simple modules in the head of $M$.

We have $\dim P(\lambda^L)=\dim K(\lambda^L)+\dim K(\lambda)$. Then, for $i\in \Z$,
\begin{equation}\label{dimprojkac}
\dim P(i)=\dim K(i)+\dim K(i-1).
\end{equation}
By the PBW basis of $U(\fg)$, we have
\begin{equation}\label{dimkac}
\dim K(\lambda)=2^{\dim \fg_{-1}}\cdot \dim L_{0}(\lambda),
 \end{equation}
 where $L_{0}(\lambda)$ is the simple $\mathfrak{sp}(2n)$-module of highest weight $\lambda$ (where we omit the $\lambda_{-1}$ from $\lambda)$. By equations~\eqref{projrestrivial}, \eqref{dimprojkac}, \eqref{dimkac}, and by Lemma~\ref{lem:bddimsimple} we can show that
\begin{equation}\label{eq2:dimPd}
kd^{2n}\leq \dim P_d \leq k'd^{2n}\quad\text{for all $d\geq 1$,}
\end{equation}
where $k,\,k'$ are positive constants that depend only on $n$. Thus, $c_{\F}(L(0|0,\ldots,0))=2n+1$. By \cite[Theorem~4.1.1]{kujawa}, all simple modules of the same atypicality have the same complexity. Thus the complexity of all atypical simple $\osp(2|2n)$-modules is $2n+1$.
\end{proof}

\subsection{Complexity of $K(0|0,\ldots,0)$ for $\fg=\osp(2|2n)$ using projective resolutions}
Using the computations done in the above theorem, we compute the complexity of the Kac module $K(0|0,\ldots,0)$.
\begin{prop}\label{comptrivialKac} $c_{\F}(K(0|0,\ldots,0))=2n$.
\end{prop}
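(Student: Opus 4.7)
The plan is to construct an explicit minimal projective resolution of $K^{(0)} = K(0|0,\ldots,0)$ by iterating the 2-step Kac flag for projective covers, and then extract the rate of growth from the Weyl dimension bound for $L_0(r,0,\ldots,0)$ already established.

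First, apply the 2-step flag $0 \to K(\lambda) \to P(\lambda^L) \to K(\lambda^L) \to 0$ with $\lambda^L = 0^{(-d)}$ (so $\lambda = 0^{(-d-1)}$), producing the short exact sequences
\[ 0 \to K^{(-d-1)} \to P^{(-d)} \to K^{(-d)} \to 0 \quad (d \geq 0). \]
Splicing these yields a complex $\cdots \to P^{(-d)} \to \cdots \to P^{(-1)} \to P^{(0)} \to K^{(0)} \to 0$ which is exact by construction. A single projective indecomposable $P^{(-d)}$ suffices at each step because, by the 2-layer structure of Kac modules noted earlier in the section, $K^{(-d)}$ has unique simple head $L(0^{(-d)}) = [-d]$, so $P^{(-d)} \twoheadrightarrow K^{(-d)}$ is the projective cover. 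Minimality then follows: the image of the differential $P^{(-d-1)} \to P^{(-d)}$ is $K^{(-d-1)}$, which is killed by the projection $P^{(-d)} \twoheadrightarrow [-d]$ onto the head, hence lies in $\mathrm{rad}(P^{(-d)})$.

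For the dimension count, the Kac flag gives $\dim P^{(-d)} = \dim K^{(-d)} + \dim K^{(-d-1)}$. Using the identifications $0^{(-d)} = (2n+d-1\,|\,d-1,0,\ldots,0)$ and $0^{(-d-1)} = (2n+d\,|\,d,0,\ldots,0)$ recorded in the preceding lemma, together with $\dim K(\lambda) = 2^{\dim \fg_{-1}} \cdot \dim L_0(\lambda)$, the Weyl dimension bound $C r^{2n-1} \leq \dim L_0(r,0,\ldots,0) \leq C' r^{2n-1}$ sandwiches $\dim P^{(-d)}$ between positive constant multiples of $d^{2n-1}$ for $d \geq 1$. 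By the definition of rate of growth, this gives $r(P_\bullet) = 2n$, so $c_\F(K(0|0,\ldots,0)) = 2n$.

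The only subtlety I anticipate is the minimality verification, but as indicated it reduces to the observation that each $K^{(-d)}$ has $[-d]$ as its unique simple head, a fact already in hand. No delicate estimates or cancellations are needed because the resolution is linear (one indecomposable per step), which also neatly explains why the complexity drops by exactly one relative to $c_\F(L(0)) = 2n+1$: there, the $d$-th term already contained $\sim d$ indecomposable summands each of polynomial degree $2n-1$, whereas here we have only one such summand per step.
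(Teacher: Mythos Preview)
Your proof is correct and follows essentially the same approach as the paper: both construct the linear resolution $\cdots \to P^{(-2)} \to P^{(-1)} \to P^{(0)} \to K^{(0)} \to 0$ by splicing the 2-step Kac flags and then bound $\dim P^{(-d)}$ by constant multiples of $d^{2n-1}$ via $\dim P^{(i)} = \dim K^{(i)} + \dim K^{(i-1)}$, the formula $\dim K(\lambda) = 2^{\dim \fg_{-1}}\dim L_0(\lambda)$, and the Weyl dimension estimate for $L_0(r,0,\ldots,0)$. Your write-up in fact supplies the minimality justification (via the simple head of each $K^{(-d)}$) that the paper leaves implicit.
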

\begin{proof}
Using the same notation as in Theorem~\ref{t2:atypcomplexitysimple}, the minimal projective resolution of $K(0)$ is given by:
\begin{equation}\label{projresK(0)}
\cdots \rightarrow P(-2)\rightarrow P(-1)\rightarrow P(0)\rightarrow K(0) \rightarrow 0.
\end{equation}
By equations~\eqref{dimprojkac}, \eqref{dimkac}, and by Lemma~\ref{lem:bddimsimple} we can show that
\begin{equation*}
kd^{2n-1}\leq \dim P_d \leq k'd^{2n-1}\quad\text{for all $d\geq 1$,}
\end{equation*}
where $k,\,k'$ are positive constants that depend only on $n$. Thus, $c_{\F}(K(0|0,\ldots,0))=2n$.
\end{proof}

\subsection{Complexity of Kac modules}\label{compKac}
Recall that if $\l$ is typical, then $K(\l)$ is projective and hence has zero complexity. We will use the complexity of $K(0|0,\ldots,0)$ to compute the complexity of any atypical Kac module. First we show that the $L$-operator on the weights does not change the complexity of the Kac modules. Then we give an explicit description of the principal block $\F^{\chi_{0}}$, where $\chi_{0}$ is the central character corresponding to the weight $\lambda=(0|0,\ldots,0)$. This description will be obtained using the characterization of blocks and the notion of weight diagrams given in \cite[Sections~5,\,6]{GS}. We then use the fact that translation functors preserve the complexity to show that $c_{\F}(K(\lambda))=2n$.

\begin{lem}\label{L1:atypcomplexityKac}
For $\lambda\in \x$, $c_{\F}(K(\lambda))=c_{\F}(K(\lambda^{(l)}))$, $l\in \Z$.
\end{lem}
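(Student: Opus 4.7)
The plan is to reduce to the single-step identity $c_{\F}(K(\lambda)) = c_{\F}(K(\lambda^L))$; iterating this and invoking the fact (Proposition~\ref{Lsurjective}) that the $L$-operator is a bijection on atypical weights, so that $\lambda^{(l)}$ is defined for every $l\in\Z$, yields the full statement by induction on $|l|$. The typical case is vacuous, as $K(\lambda)$ is then projective of complexity zero.

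To establish the single-step identity, I would invoke the two-step Kac flag
\[0 \rightarrow K(\lambda) \rightarrow P(\lambda^L) \rightarrow K(\lambda^L) \rightarrow 0\]
from \cite[Theorem~7]{CWZ}. Since the middle term is projective, $K(\lambda)$ is, up to projective summands, the first syzygy of $K(\lambda^L)$. Explicitly, if $P_0 \twoheadrightarrow K(\lambda^L)$ denotes the projective cover with kernel $\Omega K(\lambda^L)$, then Schanuel's lemma applied to the two projective covers $P(\lambda^L)\twoheadrightarrow K(\lambda^L)$ and $P_0\twoheadrightarrow K(\lambda^L)$ produces an isomorphism
\[K(\lambda) \oplus P_0 \;\cong\; \Omega K(\lambda^L) \oplus P(\lambda^L).\]

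Adding a projective direct summand to a module alters only the zeroth term of its minimal projective resolution and leaves all later terms (hence the asymptotic growth) unchanged, so $c_{\F}(K(\lambda)) = c_{\F}(\Omega K(\lambda^L))$. The minimal projective resolution of $\Omega K(\lambda^L)$ is obtained by a degree shift of the minimal projective resolution of $K(\lambda^L)$, so its rate of growth is the same, giving $c_{\F}(\Omega K(\lambda^L)) = c_{\F}(K(\lambda^L))$. Chaining these equalities proves the single-step identity and hence the lemma.

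There is no serious obstacle here; the argument is entirely formal once the Kac flag from \cite{CWZ} is on hand. The only mild point requiring care is that we need $L$ to be defined as a bijection on atypical weights in order to cover negative $l$, and this is precisely Proposition~\ref{Lsurjective}, so the induction runs in both directions from $\lambda$.
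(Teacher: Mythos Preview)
Your proof is correct and rests on the same key input as the paper's: the two-step Kac flag $0\to K(\lambda)\to P(\lambda^L)\to K(\lambda^L)\to 0$ with projective middle term. The execution differs slightly. The paper applies $\Hom(-,S)$ with $S=\bigoplus L(\mu)^{\dim P(\mu)}$ to this sequence, invokes the characterization $c_{\F}(M)=r\big(\Ext^{\bullet}_{(\fg,\fg_{\0})}(M,S)\big)$ from \cite[Proposition~2.8.1]{BKN2}, and reads off $\Ext^{d}(K(\lambda),S)\cong\Ext^{d+1}(K(\lambda^L),S)$ from the long exact sequence. You instead work directly with minimal resolutions via Schanuel's lemma, identifying $K(\lambda)$ with $\Omega K(\lambda^L)$ up to projective summands and observing that neither a projective summand nor a degree shift changes the rate of growth. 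The two routes are equivalent reformulations of the same dimension-shift phenomenon; yours is marginally more self-contained in that it does not appeal to the $\Ext$-characterization of complexity, while the paper's version makes the degree shift in cohomology completely explicit.
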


\begin{proof}
It is sufficient to prove that $c_{\F}(K(\lambda))=c_{\F}(K(\lambda^{L}))$. The complexity of any module $M\in \F$ is given by:
\begin{equation}\label{eq1:complexityExt}
c_{\F}(M)=r(\Ext^{\bullet}_{(\fg,\fg_{\0})}(M,\bigoplus L(\mu)^{\dim P(\mu)})),
\end{equation}
where the sum is over all simple modules in $\F$.  Let $S=\bigoplus L(\mu)^{\dim P(\mu)}$. By applying the functor $\Hom(\,\,\,,S)$ to the following 2-step Kac flag
\[0\rightarrow K(\lambda)\rightarrow P(\lambda^L)\rightarrow K(\lambda^L)\rightarrow 0,\]
 we get a long exact sequence in cohomology. Since $P(\lambda^L)$ is projective, then $\Ext^d(P(\lambda^L),S)$ vanishes for all $d\geq 1$, which gives
\[\Ext^d(K(\lambda^L),S)=\Ext^{d+1}(K(\lambda),S).\]
This implies that $\Ext^{\bullet}_{(\fg,\fg_{\0})}(K(\lambda),S)$ and $\Ext^{\bullet}_{(\fg,\fg_{\0})}(K(\lambda^L),S)$ will have the same rate of growth, hence the theorem follows.
\end{proof}

\subsection{Weight diagrams and translation functors}\label{ss:diagramsfunctors}
  In this subsection we refer the reader to \cite[Section~6]{GS} for the definition of weight diagrams. Assume $\l$ is a dominant atypical weight and let $\lambda+\rho=a_1\varepsilon_1+b_1\delta_1+\ldots+b_n\delta_n$. The weight diagram of $\l$ is a function $\tf_\lambda$ on $\Z_{\geq 0}$ that takes the values $ \{>,\,<,\,\times,\,0\}$ represented by a diagram according to the following algorithm:
\begin{itemize}
\item Put the symbol $>$ in position $t$ if $t=|a_1|$.
\item Put the symbol $<$ in positions $t$ if $t=b_i$ for some $i$.
\item If there are both $>$ and $<$ in the same position replace them by the symbol $\times$. The symbol $\times$ will only appear once since $atyp(\l)=1$.
\item Put $0$ otherwise.
\end{itemize}
As an example, the weight diagram of $0=(0|0,\ldots,0)$ is given by:
\[0,<,<,\ldots,<,\times,\ldots,\]
where the first $0$ is at position $0$, the first $<$ is at position $1$, the last $<$ is at position $n-1$, $\times$ is at position $n$, the dots after that stand for empty positions (or zeros). 

It is important to note the relation with the notation developed in Subsection~3.2 to compute $\lambda^L$. In fact, $(f_\l)_i=(\lambda+\rho,\delta_i)=-b_i$ for $i>0$. Suppose $|a_1|=b_i$ for some $i>0$, define the \emph{core} of $\lambda$ to be the set $\core(\l)=\{b_j\,|\,j\neq i\}$. We will represent $\core(\l)$ by the same diagram as $\tf_\lambda$ but replacing the $\times$ by $0$. For simplicity, we denote the diagram of $\core(\l)$ by $\tf_{\core(\l)}$ even though $\core(\l)$ is not a weight. Thus $\tf_\lambda$ and  $\tf_{\core(\l)}$ have $<,>$ at the same positions. For example, $\tf_{\core(0)}$ is given by:
\[0,<,<,\ldots,<,0,\ldots.\]

As discussed in \cite[Section~6]{GS}, a weight $\lambda$ is not uniquely determined by its weight diagram and there are two conditions for $\tf_\l$ to be a weight diagram of a dominant weight $\l$. However, if two weights have the same diagram, they will be in the same block because they have the same atypicality and core (cf. \cite[Section~5]{GS}). Using the characterization of the blocks in $\F$ given in \cite[Section~5]{GS}, we have:
\begin{cor}\label{c1:blocks}
Let $\chi_{\lambda}$ be the central character corresponding to the weight $\lambda$ and $\F^{\chi_{\lambda}}$ be the corresponding block. Then $\lambda$ and $\lambda^{(l)}$, $l\in \Z$, belong to the same block $\F^{\chi_{\lambda}}$.
\end{cor}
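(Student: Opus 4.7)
The plan is to reduce the corollary to the single-step claim that $\l$ and $\l^L$ lie in the same block. Once that is established, a straightforward induction on $|l|$ extends it to every $\l^{(l)}$, using the bijectivity of $L$ on atypical weights (Proposition~\ref{Lsurjective}). By the characterization of blocks in $\F$ given in \cite[Sections~5,\,6]{GS}, two atypical weights lie in the same block if and only if their cores coincide, so it suffices to show $\mathrm{core}(\l)=\mathrm{core}(\l^L)$.

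To analyze the core, I would write $\l+\rho=a_1\varepsilon_1+b_1\delta_1+\cdots+b_n\delta_n$ and let $\gamma$ be the unique positive odd root with $(\l+\rho,\gamma)=0$, which is either $\varepsilon_1-\delta_i$ or $\varepsilon_1+\delta_i$ for a unique index $i$. In either case a direct computation with the bilinear form yields $|a_1|=|b_i|$, so in $\tf_\l$ the mark $>$ at $|a_1|$ coincides with the mark $<$ at $|b_i|$ and produces the unique $\times$. By definition $\l^L+\rho=\omega(\l+\rho-k\gamma)$ for some $\omega\in W(\mathfrak{sp}(2n))$, and since $\omega$ preserves $\varepsilon_1$ and acts on the $\delta_j$-coefficients only by permutations and sign changes, one obtains $|a_1^L|=|a_1-k|$ together with the multiset equality $\{|b_j^L|:1\leq j\leq n\}=\{|b_j|:j\neq i\}\cup\{|a_1-k|\}$. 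Thus $\l^L$ is again atypical, the unique $\times$ in $\tf_{\l^L}$ has moved to position $|a_1-k|$, and the remaining $<$-positions agree with those of $\l$. Replacing each $\times$ by $0$ therefore yields identical cores.

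The main obstacle will be the bookkeeping in this last step: one must verify that after the shift $-k\gamma$ and the Weyl action $\omega$, no accidental new coincidence occurs between the relocated $>$-mark and an existing $<$-mark. This follows because $\l^L\in\x$ forces $b_1^L<b_2^L<\cdots<b_n^L<0$, so the $|b_j^L|$'s are pairwise distinct, while atypicality of $\l^L$ provides exactly one coincidence between $>$ and $<$. With the core equality in hand, the corollary follows by induction on $|l|$.
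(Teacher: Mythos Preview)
Your approach is essentially the same as the paper's: reduce to the single-step case $\lambda$ vs.\ $\lambda^L$, invoke the Gruson--Serganova block criterion via cores, and verify that the $L$-operator moves only the $\times$-marker while leaving the remaining $<$-positions fixed. The paper carries this out by citing the explicit description of $f^L$ in \cite[Section~1.2]{CWZ} (from which one reads off directly that $(f^L)_j=f_j$ for $j\neq i$), whereas you rederive the same fact from the definition of $L$ via $\omega$ and $\gamma$; note the sign slip in your last paragraph (for $\lambda^L\in\x$ one has $b_1^L>b_2^L>\cdots>b_n^L>0$, not $<0$), but this does not affect the argument.
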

\begin{proof}
It is sufficient to prove that $\lambda$ and $\lambda^{L}$ are in the same block. By the definition of the $L$-operator, $\lambda^L$ and $\lambda$ have the same atypicality.  Consider the three cases that defined $f_{\l}^L$ in \cite[Section~1.2]{CWZ} and suppose $(f_\l)_{-1}=\pm (f_\l)_i$ for some $1\leq i\leq n$. In the diagram of $\core(\lambda)$, we will have $<$ at the positions $-(f_\l)_j$, $j\neq i$. However, by the definition of $f_{\l}^L$, we will have $<$ at the same positions  $-(f_\l)_j$, $j\neq i$ in the diagram of $\core(\lambda^L)$. The corollary follows.
\end{proof}

In the following we have an explicit description of the block $\F^{\chi_{0}}$:
\begin{lem}\label{L1:principalblock}
$\F^{\chi_{0}}=\{0^{(l)}\,|\, l\in \Z\}.$
\end{lem}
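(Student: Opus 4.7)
My plan is to prove both containments separately. The inclusion $\{0^{(l)} \mid l \in \Z\} \subseteq \F^{\chi_{0}}$ is immediate from Corollary~\ref{c1:blocks}, since each $0^{(l)}$ has the same atypicality and the same core as $0$. The reverse inclusion is the substance of the statement, and the tool I would use is again the Gruson--Serganova block criterion from \cite[Section~5]{GS}: a weight $\lambda \in \x$ with $\atyp(\lambda) = 1$ lies in $\F^{\chi_{0}}$ precisely when $core(\tf_\lambda) = core(\tf_0)$.

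First I would compute $core(\tf_0)$ explicitly. Since $\rho = -n\varepsilon_1 + \sum_{i=1}^n (n-i+1)\delta_i$, the diagram $\tf_0$ carries $>$ at position $|a_1| = n$ and $<$'s at positions $b_i = n-i+1$ for $i = 1, \ldots, n$; the coincident $>$ and $<$ at position $n$ combine into $\times$, so $core(\tf_0)$ has $<$'s at positions $1, 2, \ldots, n-1$ and $0$'s elsewhere.

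Next I would reverse-engineer any $\lambda \in \F^{\chi_{0}}$ from its diagram. Writing $\lambda + \rho = a_1 \varepsilon_1 + \sum_i b_i \delta_i$, the dominance condition defining $\x$ forces $b_1 > b_2 > \cdots > b_n \geq 1$, so the $b_i$'s are distinct positive integers. Atypicality $1$ means exactly one $b_i$ agrees with $k := |a_1|$, leaving the remaining $n-1$ $<$'s at the positions $\{b_j : j \neq i\}$. Matching the core of $0$ then demands $\{b_j : j \neq i\} = \{1, 2, \ldots, n-1\}$. Because the $b_i$'s are pairwise distinct and positive, the absorbed value $k$ cannot lie in $\{0, 1, \ldots, n-1\}$, hence $k \geq n$. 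The strict ordering then pins down $b_1 = k$ and $b_j = n-j+1$ for $j \geq 2$, from which $\lambda_1 = k-n$, $\lambda_j = 0$ for $j \geq 2$, and $\lambda_{-1} = n \pm k$. Setting $d = k - n \geq 0$, the two sign choices yield exactly $(-d \mid d, 0, \ldots, 0) = 0^{(d)}$ and $(2n + d \mid d, 0, \ldots, 0) = 0^{(-d-1)}$ by Lemma~\ref{t1:projcover(2,2n)}, exhausting $\F^{\chi_{0}}$.

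The bulk of this is a routine diagram chase, and the main obstacle I anticipate is the bookkeeping needed to rule out $k \in \{0, 1, \ldots, n-1\}$; the distinctness and positivity of the $b_i$'s together close off these alternatives and force $k \geq n$, which is exactly what makes the family $\{0^{(l)} \mid l \in \Z\}$ exhaustive rather than merely a proper subfamily of the block.
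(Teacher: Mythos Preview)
Your proposal is correct and follows exactly the approach the paper has in mind: one inclusion from Corollary~\ref{c1:blocks}, the other by reading off from the Gruson--Serganova core criterion that any $\lambda$ in the block has diagram with $<$'s at positions $1,\ldots,n-1$ and a single $\times$ at some position $k\ge n$, then identifying the two resulting weights with $0^{(d)}$ and $0^{(-d-1)}$ via Lemma~\ref{t1:projcover(2,2n)}. The paper's published proof is a one-line citation of Corollary~\ref{c1:blocks} and Lemma~\ref{t1:projcover(2,2n)}; your write-up supplies precisely the omitted details (and in fact matches almost verbatim the expanded argument the author suppressed).
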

\begin{proof}
 By Corollary~\ref{c1:blocks}, $0^{(l)}\in \F^{\chi_{0}}$ for all $l\in \Z$. Now assume $\lambda\in \F^{\chi_{0}}$, then $\atyp(\lambda)=1$ and $\core(\lambda)=\core(0)$. Thus the weight diagram of $\lambda$ has one $\times$, and $n-1$ $<$'s at positions $1,2,\ldots,n-1$ as in the weight diagram of $0$. The $\times$ cannot be at the zero position because $\lambda$ will not be dominant in that case, so it will be at some position $k\geq n$. The weight diagram of $\lambda$ is:
\[0,<,<,\ldots,<,0,\ldots,0,\times,\ldots,\]
where the $\times$ is at position $k$, from which we get:
\[\lambda+\rho=(-k|k,n-1,\ldots,2,1) \quad \text{or}\quad \lambda+\rho=(k|k,n-1,\ldots,2,1).\]
Thus we have two weights:
\[\lambda=(-k+n|k-n,0,\ldots,0) \quad \text{or}\quad \lambda=(k+n|k-n,0,\ldots,0).\]
Let $d=k-n\geq 0$. If $d=0$, then $\lambda=0$ or $\lambda=(2n|0,0,\ldots,0)$. By Lemma~\ref{t1:projcover(2,2n)}(c), $(2n|0,\ldots,0)^L=0$. Thus $\lambda=0^{(0)}$ or $\lambda=0^{(-1)}$.\\
If $d\geq 1$, then $\lambda=(-d|d,0,\ldots,0)$ or $\lambda=(2n+d|d,0,\ldots,0)$. By Lemma~\ref{t1:projcover(2,2n)}(a),(b), $(-d|d,0,\ldots,0)=0^{(d)}$ and $(2n+d|d,0,\ldots,0)^{(d+1)}=0$. Thus $\lambda=0^{(d)}$ or $\lambda=0^{(-d-1)}$. This completes the proof.
\end{proof}

\begin{lem}\label{L2:atypcomplexityKac}
Let $\lambda \in \F^{\chi_{\lambda}}$ with $\atyp(\lambda)=1$. Then $c_{\F}(K(\lambda))=c_{\F}(K(\mu))$ 
for some $\mu\in \F^{\chi_{0}}$.
\end{lem}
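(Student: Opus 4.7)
The plan is to reduce the general atypical case to the principal-block case via an equivalence of blocks that sends Kac modules to Kac modules and preserves polynomial rates of growth. By \cite[Sections~5,6]{GS}, any two atypical blocks of $\osp(2|2n)$ of the same atypicality are equivalent as categories, and the equivalence is realized by a composition of translation functors of the form $T = \mathrm{pr}_{\chi'}(- \otimes V)$ for a suitable finite-dimensional $\fg$-module $V$. Since $\atyp(\lambda) = 1 = \atyp(0)$ by Lemma~\ref{L1:principalblock}, this furnishes an equivalence $T: \F^{\chi_0} \to \F^{\chi_\lambda}$ with an inverse $T'$ of the same form.

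First I would record the standard properties of $T$ and $T'$: both are exact, and both send projectives to projectives, since tensoring with a finite-dimensional module preserves exactness and projectivity, and projection to a block preserves both as well. Consequently, if $\p \twoheadrightarrow M$ is a (minimal) projective resolution in one block, then $T(\p) \twoheadrightarrow T(M)$ is a (not necessarily minimal) projective resolution of $T(M)$ in the other, and likewise for $T'$.

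Second I would check that $T$ carries Kac modules to Kac modules. Using the tensor identity
\[
K(\nu) \otimes V \;\cong\; U(\fg) \otimes_{U(\fp^+)} \bigl(L_{\0}(\nu) \otimes V\bigr),
\]
the module $K(\nu) \otimes V$ acquires a Kac filtration whose subquotients are the Kac modules $K(\nu + \sigma)$, indexed by the weights $\sigma$ of $V$. Projecting onto a single block retains only the subquotients whose highest weights lie in that block; in the Gorelik--Serganova construction $V$ is chosen so that exactly one such subquotient survives, so $T(K(\nu))$ is itself a Kac module. Let $\mu \in \F^{\chi_0}$ be the unique weight determined by $T(K(\mu)) = K(\lambda)$.

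Finally, to transfer the complexity I would use the dimension estimates
\[
\dim T(M) \leq (\dim V) \cdot \dim M, \qquad \dim T'(N) \leq (\dim V') \cdot \dim N,
\]
applied to a minimal projective resolution $\p \twoheadrightarrow K(\mu)$ in $\F^{\chi_0}$ and a minimal projective resolution $\mathbf{Q}_\bullet \twoheadrightarrow K(\lambda)$ in $\F^{\chi_\lambda}$. Since $T(\p)$ is a projective resolution of $K(\lambda)$, the minimal resolution $\mathbf{Q}_\bullet$ satisfies $\dim Q_d \leq \dim T(P_d) \leq (\dim V)\dim P_d$, giving $c_\F(K(\lambda)) \leq c_\F(K(\mu))$; the symmetric estimate using $T'$ yields the reverse inequality. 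Hence $c_\F(K(\lambda)) = c_\F(K(\mu))$.

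The main obstacle will be pinning down the precise translation functor realizing the Gorelik--Serganova block equivalence and verifying that it genuinely maps the isomorphism class $K(\mu)$ onto $K(\lambda)$, rather than merely a module with a Kac flag. This reduces to a combinatorial computation with the weight diagrams $\tf_\lambda$ and $\tf_0$: one moves the single $\times$ of $\tf_\lambda$ to the $\times$-position in $\tf_0$ one symbol at a time, following the induced action on Kac modules at each step. Since the lemma asks only for existence of such a $\mu$, matching weight diagrams suffices.
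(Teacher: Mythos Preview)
Your proposal is correct and follows essentially the same approach as the paper: both use the Gruson--Serganova translation functors to obtain an equivalence $\F^{\chi_\lambda}\simeq\F^{\chi_0}$ (described combinatorially via weight diagrams), and then argue that these functors preserve complexity. The paper outsources the complexity-preservation step to \cite[Section~6.3]{BKN1} and leaves the ``Kac to Kac'' property implicit, whereas you spell out the tensor-identity argument and the dimension estimates explicitly; these are exactly the details underlying the cited reference, so the arguments coincide.
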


\begin{proof}
The translation functors defined in \cite[Section~5]{GS} move a simple module $L(\lambda)\in \F^{\chi}$ to $L(\mu)\in\F^{\tau}$. Let $\chi_{\lambda}$ be the central character corresponding to the weight $\lambda$ and $\F^{\chi_{\lambda}}$ be the corresponding block. Let $\F^{\chi_{0}}$ be the block containing the trivial module. The translation functors define an equivalence of blocks between $\F^{\chi_{\lambda}}$ and $\F^{\chi_{0}}$. To see this, we note that $\core(\lambda)$ has $<$ at $n-1$ positions, the same as $\core(0)$. Assume the $\times$ in $\tf_\lambda$ is at position $i$ for some $i\in \N$,
then the translation functors allow us to move the $<$'s to the positions $1,2,\ldots,n-1$ and move $\times$ to some position $k\geq n$. The diagram we get is $\tf_\mu$ where $\mu\in \F^{\chi_{0}}$. The same discussion as in  \cite[Section~6.3]{BKN1} shows that the translation functors preserve the complexity of any $\fg$-module. This completes the proof.
\end{proof}

\begin{thm}\label{t1:atypcomplexityKac}
For atypical $\lambda\in \x$, $c_{\F}(K(\lambda))=2n$.
\end{thm}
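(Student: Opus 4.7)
The proof is set up to be a short chaining of the four results immediately preceding the theorem, so my plan is simply to apply them in sequence.

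First, I would invoke Lemma~\ref{L2:atypcomplexityKac}: given any atypical $\lambda \in \x$, the translation-functor argument there produces a weight $\mu \in \F^{\chi_{0}}$ (the principal block containing the trivial module) such that $c_{\F}(K(\lambda)) = c_{\F}(K(\mu))$. This reduces the problem from an arbitrary atypical block to the single principal block.

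Next, by Lemma~\ref{L1:principalblock} the principal block is exhausted by the $L$-orbit of the trivial weight, i.e.\ $\mu = 0^{(l)}$ for some $l \in \Z$. Then Lemma~\ref{L1:atypcomplexityKac}, which shows that applying the $L$-operator to a highest weight does not change the complexity of the associated Kac module (obtained by shifting degree via the two-step Kac flag and the vanishing of $\Ext^{\bullet}$ against projectives), gives
\[
c_{\F}(K(\mu)) = c_{\F}(K(0^{(l)})) = c_{\F}(K(0|0,\ldots,0)).
\]
Finally, Proposition~\ref{comptrivialKac} evaluates the right-hand side as $2n$, and stringing the three equalities together yields $c_{\F}(K(\lambda)) = 2n$, as desired.

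In short, no new estimate is needed: the combinatorics of weight diagrams (Corollary~\ref{c1:blocks} and Lemma~\ref{L1:principalblock}) identifies the principal block with an $L$-orbit, translation functors transport any other atypical block onto this one while preserving complexity, and the $L$-operator shifts within the orbit without changing complexity. The main conceptual obstacle has already been overcome in Proposition~\ref{comptrivialKac}, where the explicit minimal projective resolution \eqref{projresK(0)} of $K(0)$ and the dimension bounds from Lemma~\ref{lem:bddimsimple} were used to pin down the rate of growth as $2n$; the present theorem adds no further analytic content beyond citing the preceding lemmas in the right order.
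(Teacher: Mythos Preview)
Your proposal is correct and follows essentially the same approach as the paper's proof: both chain together Lemma~\ref{L2:atypcomplexityKac}, Lemma~\ref{L1:principalblock}, Lemma~\ref{L1:atypcomplexityKac}, and Proposition~\ref{comptrivialKac} in the same order to reduce from an arbitrary atypical $\lambda$ to the principal block, then to $K(0)$, and finally read off the value $2n$.
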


\begin{proof}
We use Lemmas~\ref{L2:atypcomplexityKac} and \ref{L1:principalblock} to get $c_{\F}(K(\lambda))=c_{\F}(K(\mu))$ for some $\mu=0^{(l)}\in \F^{\chi_{0}}, \, l\in \Z$. The proof follows by Lemma~\ref{L1:atypcomplexityKac} and Proposition~\ref{comptrivialKac} since $c_{\F}(K(0^{(l)}))=c_{\F}(K(0))=2n$.
\end{proof}

\section{geometric interpretation of complexity over $\osp(2|2n)$}\label{S:varKac}
 This section will be dedicated towards giving  a geometric interpretation of the complexity using rank, support, and associated varieties. In the following we introduce rank varieties and compute their dimensions for the Kac modules. In this section,  $\fg$ is assumed to be $\osp(2|2n)$ unless we indicate otherwise.  
\subsection{Rank variety}\label{ss:rankvar}
Let $\F(\fg_{\pm1})$ be the category of finite-dimensional $\fg_{\pm 1}$-modules and let $M\in \F(\fg_{\pm1})$. From \cite[Subsection~3.2]{BKN2}, we will use the following rank variety:
\[\fv_{\fg_{\pm1}}^{rank}(M):=\{x\in \fg_{\pm1} \mid \text{$M$ is not projective as a $U(\langle x \rangle)$-module}\}\cup\{0\},\]
where $U(\langle x \rangle)$ denotes the enveloping algebra of the Lie superalgebra generated by $x\in \fg_{\pm1}$. By \cite[Proposition~5.4]{BKN3}, $M$ will be projective as $U(\langle x \rangle)$-module if and only if it does not contain a direct summand which is isomorphic to the trivial module.

\subsection{Rank variety of Kac modules}
We start by computing the rank variety of an atypical Kac module over $\fg=\osp(2|2)$:

\begin{prop}\label{t2:varKac(2,2)}
For $\fg=\osp(2|2)$ and for an atypical $\lambda\in \x$, $\fv_{\fg_1}^{rank}(K(\lambda))=\fg_1$.
\end{prop}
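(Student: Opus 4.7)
The plan is to combine $G_{\0}$-equivariance of the rank variety, which reduces the problem to a single nonzero $x \in \fg_1$, with an explicit display of a nonzero class in $K(\lambda)_x := \ker(x)/\mathrm{image}(x)$. Since $G_{\0}$ acts by the adjoint action on $\fg_1$ and compatibly on $K(\lambda)$, the rank variety $\fv_{\fg_1}^{rank}(K(\lambda))$ is a $G_{\0}$-stable conical subvariety of $\fg_1$. For $\fg = \osp(2|2)$, the space $\fg_1$ carries the weights $\varepsilon_1 \pm \delta_1$, which identifies it with the standard $2$-dimensional irreducible representation of the $Sp(2) = SL_2$-factor of $G_{\0}$, twisted by the character $\varepsilon_1$; since $SL_2$ acts transitively on the nonzero vectors of its standard representation, $G_{\0}$ has only two orbits on $\fg_1$, so $\fv_{\fg_1}^{rank}(K(\lambda))$ is either $\{0\}$ or all of $\fg_1$. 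It thus suffices to produce one nonzero $x \in \fg_1$ with $K(\lambda)_x \ne 0$; since $[x,x] = 0$ makes $U(\langle x \rangle) \cong \C[x]/(x^2)$, this is equivalent to $K(\lambda)$ failing to be $U(\langle x \rangle)$-projective.

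Next I would exploit the PBW isomorphism $K(\lambda) \cong \Lambda^{\bullet}(\fg_{-1}) \otimes L_{\0}(\lambda)$ together with the fact that $\fg_1 \subset \fp^+$ annihilates $L_{\0}(\lambda)$ by inflation. A direct check shows that any $x \in \fg_1$ strictly lowers the exterior degree; since $\dim \fg_{-1} = 2$, this has two automatic consequences: every vector of $1 \otimes L_{\0}(\lambda)$ lies in $\ker(x)$, and every vector of the top piece $\Lambda^2(\fg_{-1}) \otimes L_{\0}(\lambda)$ lies outside $\mathrm{image}(x)$.

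Let $\gamma_1 = \varepsilon_1 + \delta_1$ and $\gamma_2 = \varepsilon_1 - \delta_1$ denote the two positive odd roots, and let $\gamma \in \{\gamma_1, \gamma_2\}$ be the unique one with $(\lambda + \rho, \gamma) = 0$. Take $x = e_\gamma$. With $\rho = -\varepsilon_1 + \delta_1$, one has $(\rho, \gamma_2) = 0$ and $(\rho, \gamma_1) = -2$, which forces a case split. If $\gamma = \gamma_2$, then $(\lambda, \gamma) = 0$ and the witness is $1 \otimes v_\lambda$: a weight-space count shows the only potential preimage is a scalar multiple of $e_{-\gamma} \otimes v_\lambda$, and
\[x \cdot (e_{-\gamma} \otimes v_\lambda) = h_\gamma \otimes v_\lambda = (\lambda, \gamma)(1 \otimes v_\lambda) = 0,\]
so $1 \otimes v_\lambda \notin \mathrm{image}(x)$. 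If instead $\gamma = \gamma_1$, then $(\lambda, \gamma) = 2$ and the witness is $e_{-\gamma_1} e_{-\gamma_2} \otimes v_\lambda \in \Lambda^2(\fg_{-1}) \otimes L_{\0}(\lambda)$, which is outside $\mathrm{image}(x)$ by the second automatic inclusion; expanding the action gives
\[x \cdot (e_{-\gamma_1} e_{-\gamma_2} \otimes v_\lambda) = \bigl((\lambda,\gamma) - 2\bigr) e_{-\gamma_2} \otimes v_\lambda - c \, e_{-\gamma_1} \otimes (e_{2\delta_1} v_\lambda),\]
and both summands vanish: the first because $(\lambda,\gamma) = 2$, the second because the positive root vector $e_{2\delta_1}$ annihilates the highest weight vector $v_\lambda$.

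The main obstacle is the second-case computation, where one must track the bracket $[h_{\gamma_1}, e_{-\gamma_2}] = -2 \, e_{-\gamma_2}$ (arising from $(\gamma_1, \gamma_2) = 2$) and the eigenvalue $h_\gamma v_\lambda = (\lambda, \gamma) v_\lambda$ (forced by atypicality) in order to see the exact cancellation. The case split on the sign of $(\rho, \gamma)$ appears unavoidable, since $1 \otimes v_\lambda$ lies in $\mathrm{image}(x)$ whenever $(\lambda,\gamma) \ne 0$, while an extremal weight vector in $\Lambda^2(\fg_{-1}) \otimes L_{\0}(\lambda)$ fails to be in $\ker(x)$ when $(\lambda,\gamma) = 0$.
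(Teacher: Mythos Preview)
Your proposal is correct and follows essentially the same approach as the paper's proof. Both arguments reduce via the $G_{\0}$-orbit structure on $\fg_1$ to exhibiting a single nonzero $x$ for which $K(\lambda)$ is not $U(\langle x\rangle)$-projective, split into the same two cases according to which isotropic root $\gamma$ satisfies $(\lambda+\rho,\gamma)=0$, and use the same witness vectors (the highest weight vector $1\otimes v_\lambda$ when $\gamma=\varepsilon_1-\delta_1$, the top-degree vector $e_{-\gamma_1}e_{-\gamma_2}\otimes v_\lambda$ when $\gamma=\varepsilon_1+\delta_1$); the only cosmetic difference is that you phrase non-projectivity via $\ker(x)/\mathrm{image}(x)\neq 0$ while the paper phrases it as the existence of a trivial $U(\langle x\rangle)$-direct summand, and you use root-theoretic notation where the paper works with the explicit matrix realization.
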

\begin{proof}
By \cite[Subsection~3.8.4]{BKN2}, the action of $G_{\0}=\C^*\times \operatorname{Sp}(2n)$ on $\fg_1$ has two orbits $\{0\}$ and $\fg_1\setminus\{0\}$. Since $\fv_{\fg_1}^{rank}(M)$ is a closed $G_{\0}$-stable subvariety of $\fv_{\fg_1}^{rank}(\C)=\fg_1$, it is enough to find $0\neq x \in \fg_1$ such that $K(\lambda)$ contains a direct summand isomorphic to the trivial module when viewed as a $U(\langle x \rangle)$-module. Note that $\fg_1$ is spanned by the root vectors $x_1=E_{1,3}-E_{4,2}$ and $x_2=E_{1,4}+E_{3,2}$ with weights $wt(x_1)=\varepsilon_1-\delta_1$ and $wt(x_2)=\varepsilon_1+\delta_1$. On the other hand, $\fg_{-1}$ is spanned by $y_1=E_{2,4}+E_{3,1}$ and $y_2=E_{2,3}-E_{4,1}$ with weights $wt(y_i)=-wt(x_i),\, i=1,2$. We will need the elements $z_{ij}=[x_i,y_j]$, $i,j\in\{1,2\}$, of $\fg_{\0}\cong \C\oplus \s_2$ as we proceed:
\[
z_{11}=1\oplus h,\quad z_{22}=-1\oplus h,\quad z_{12}=0\oplus -2f, \quad\text{and}\quad z_{21}=0\oplus 2e,
\]
where $\{e,f,h\}$ is the standard basis of $\s_2$. The simple $\fg_{\0}$-modules are of the form $L_{\0}(\lambda)=\C_{\lambda_{-1}}\boxtimes L(d)$ where $L(d)$ is the simple $\s_2$-module of dimension $d+1$. Let $\{v_k\mid 0 \leq k \leq d\}$ be the basis for $L(d)$ given in \cite[Subsection~7.2]{HUM} with $v_0$ being the highest weight vector such that:
\[h.v_k=(d-2k)v_k,\quad e.v_k=(d-k+1)v_{k-1},\quad f.v_k=(k+1)v_{k+1},\]
with $v_{-1}=v_{d+1}=0$. Let $w_k=1\otimes v_k$ then $\{w_k\mid 0\leq k\leq d\}$ is a basis of $L_{\0}(\lambda)$ with $w_0$ being the highest weight vector. By the PBW-basis theorem for $U(\fg)$,  $\dim K(\lambda)=4(d+1)$ with basis $\{y_1^ay_2^b\otimes w_k\mid a,\,b\in \{0,1\},\,0\leq k\leq d\}$.  It is worth noting that the Kac module $K(\l)$ has a $\Z$-grading induced from the $\Z$-grading on $\fg$. Namely, the $\Z$-grading of the basis vector $y_1^ay_2^b\otimes w_k$ is $-a-b$. 

We have $\rho=(-1|1)$ then $\lambda=(\lambda_{-1}|d)$ is atypical if $(\rho+\lambda,\varepsilon_1\pm\delta_1)=0$. We have two cases:\\
\textbf{Case 1}: Using the odd root $wt(x_1)=\varepsilon_1-\delta_1$, we have $\lambda=(-d|d)$. Then, $x_1(1\otimes w_0)=1\otimes x_1w_0=0$ since $w_0$ is annihilated by $\fg_1$. Then by its $\Z$-grading, only $y_i\otimes w_k$, $i=1,2$ and $0\leq k\leq d$, could be sent by $x_1$ to the $U(\langle x_1 \rangle)$-submodule generated by $1\otimes w_0$.  If $y_i\otimes w_k$,  $i=1,2$ and $1\leq k\leq d$, is sent to that submodule, then
\[wt(y_i\otimes w_k)=wt(1\otimes w_0) -wt(x_1),\]
then $wt(y_i)+wt(x_1)=wt(w_0)-wt(w_k)$ is a sum of positive roots. Thus $i\neq 2$ since $wt(y_2)+wt(x_1)=-2\delta_1$. If $i=1$, then this sum of positive roots is $wt(y_1)+wt(x_1)=0$ which makes $k=0$. Therefore only $y_1\otimes w_0$ could land in this submodule. However,
\[x_1(y_1\otimes w_0)=-y_1\otimes x_1w_0+1\otimes z_{11}w_0=0+(-d+d)(1\otimes w_0)=0.\]
Therefore, the $U(\langle x_1 \rangle)$-submodule generated by $1\otimes w_0$ forms a trivial direct summand of $K(\lambda)$ when viewed as a $U(\langle x_1 \rangle)$-module.\\ 

\textbf{Case 2}: Using the odd root $wt(x_2)=\varepsilon_1+\delta_1$, $\lambda=(d+2|d)$. Then we can show
\begin{align*}
x_2(y_1y_2\otimes w_0)& =(-y_1x_2+z_{21})y_2\otimes w_0=-y_1(-y_2x_2+z_{22})\otimes w_0+(-2y_1+y_2z_{21})\otimes w_0\\
                      & \hspace{4.1cm} =  0-y_1\otimes (-d-2+d)w_0-2y_1\otimes w_0+0=0.
\end{align*}
In the above we used the fact that $x_2$ and $z_{21}$ both annihilate $w_0$. Since $y_1y_2\otimes w_0$ has a $\Z$-grading of $-2$ and $x_2$ has a $\Z$-grading of $1$, none of the basis elements $y_1^ay_2^b\otimes w_k$, $1\leq k\leq d, a,b=0,1$, can be sent by $x_2$ into the $U(\langle x_2 \rangle)$-submodule generated by $y_1y_2\otimes w_0$. Thus this submodule forms a trivial direct summand of $K(\lambda)$ when viewed as a $U(\langle x_2 \rangle)$-module. 
\end{proof}

To generalize the above theorem, we need to use the equivalence of blocks stated in \cite[Theorem~2]{GS}. Indeed, we have:
\begin{thm}\label{t:varkac}
For $\fg=\osp(2|2n)$ and when $\lambda$ is atypical, $\fv_{\fg_1}^{rank}(K(\lambda))=\fg_1$.
\end{thm}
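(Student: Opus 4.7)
The plan is to reduce the statement to a single explicit non\-projectivity check by combining an orbit-closure argument for the $G_{\0}$-action on $\fg_1$ with the block equivalence~\cite[Theorem~2]{GS}, and then to adapt the computation of Proposition~\ref{t2:varKac(2,2)}.

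First I would analyze the $G_{\0}$-orbit structure on $\fg_1$. Under the identification $G_{\0} = \C^{*} \times \operatorname{Sp}(2n)$, the odd part $\fg_1$ realizes the standard $2n$-dimensional symplectic representation twisted by a nontrivial character of the central $\C^{*}$ (all vectors in $\fg_1$ have $\varepsilon_1$-weight $+1$). Since $\operatorname{Sp}(2n)$ acts transitively on the nonzero vectors of its standard module, $G_{\0}$ has exactly two orbits on $\fg_1$, namely $\{0\}$ and $\fg_1 \setminus \{0\}$. Because $\fv_{\fg_1}^{rank}(K(\lambda))$ is a closed $G_{\0}$-stable subvariety of $\fv_{\fg_1}^{rank}(\C) = \fg_1$, it equals one of the two orbit closures. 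Hence it suffices to exhibit a single nonzero $x \in \fg_1$ together with a vector $v \in K(\lambda)$ satisfying $xv = 0$ and $v \notin xK(\lambda)$, producing a trivial $U(\langle x\rangle)$-direct summand.

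Second, I would reduce $\lambda$ to a standard representative. By Corollary~\ref{c1:blocks} and the equivalence of blocks in~\cite[Theorem~2]{GS} (as already used in the proof of Lemma~\ref{L2:atypcomplexityKac}), the translation-functor equivalence between the atypical block $\F^{\chi_\lambda}$ and the principal block $\F^{\chi_0}$ sends Kac modules to Kac modules and identifies the action of $\fg_1$-root vectors with the $\fg_1$-action on the corresponding Kac module in $\F^{\chi_0}$, up to the orbit closure observed in the previous step. By Lemma~\ref{L1:principalblock} and Lemma~\ref{t1:projcover(2,2n)}, it therefore suffices to verify the theorem for $\lambda = 0^{(l)}$, namely for $\lambda = (-l\,|\,l, 0, \ldots, 0)$ with $l \geq 0$ or $\lambda = (2n+l\,|\,l, 0, \ldots, 0)$ with $l \geq 0$.

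Finally, for such a representative I would mimic the case analysis of Proposition~\ref{t2:varKac(2,2)}: choose $x$ to be the root vector of the positive odd root $\gamma$ singled out by the atypicality condition (either $\varepsilon_1 - \delta_1$ or $\varepsilon_1 + \delta_1$), and take $v$ to be either $1 \otimes w_0$ or a suitable PBW monomial $y_{i_1}\cdots y_{i_k} \otimes w_0 \in K(\lambda)$, where $w_0$ is a highest weight vector of $L_{\0}(\lambda)$. Using $xw_0 = 0$ (since $\fg_1$ acts trivially on $L_{\0}(\lambda)$ viewed as a $\fp^{+}$-module) together with the commutator identity
\[
x(y_{i_1}\cdots y_{i_k}\otimes w_0) = \sum_{j=1}^{k} (-1)^{j-1}\, y_{i_1} \cdots \widehat{y_{i_j}} \cdots y_{i_k} \otimes [x, y_{i_j}] w_0,
\]
one checks that the atypicality condition on $\lambda$ forces the right-hand side to vanish, while the $\Z$-degree of $v$ (with respect to the Type~I grading of $\fg$) rules out $v$ lying in $xK(\lambda)$. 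The main obstacle is organizing the commutator cancellations: for general $n$, each $[x, y_{i_j}]$ lands in $\fg_{\0}$ and then must be moved past the remaining $y$'s using the $\fg_{\0}$-action on $\fg_{-1}$, producing many terms. The block reduction is precisely what keeps this bookkeeping at the level of the $\osp(2|2)$ computation already carried out.
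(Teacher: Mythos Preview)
Your orbit-structure step is correct and matches the paper. The real gap is in the reduction.

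Your step~2 reduces, via translation functors, to the principal block $\F^{\chi_0}$ of $\osp(2|2n)$ and hence to the weights $\lambda=0^{(l)}$. But this is the principal block of $\osp(2|2n)$, not of $\osp(2|2)$: the rank $n$ has not changed. Consequently the explicit check in your step~3 is a genuine general-$n$ calculation---there are $2n$ odd negative root vectors $y_i$, and for the second family $\lambda=(2n+l\,|\,l,0,\dots,0)$ the candidate vector is a PBW monomial of length up to $2n$---and you have not carried it out. Your final sentence claims the block reduction ``keeps this bookkeeping at the level of the $\osp(2|2)$ computation already carried out,'' but that is precisely what your reduction does \emph{not} do. (A secondary imprecision: translation functors do not literally ``identify the action of $\fg_1$-root vectors''; that they preserve $\fv_{\fg_1}^{rank}$ should instead be argued via the tensor-product and direct-summand properties of rank varieties applied to the functor and its inverse.)

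The paper closes this gap by using \cite[Theorem~2]{GS} in a different way: rather than moving between blocks of $\osp(2|2n)$, it \emph{restricts} the $\fg$-module $K(\lambda)$ to the subalgebra $\fg'=\osp(2|2)\hookrightarrow\fg$. The block equivalence, realized as translation functors followed by restriction, gives $K(\lambda)\big|_{\fg'}\cong K(\lambda')\oplus M$ with $K(\lambda')$ an atypical $\fg'$-Kac module, so Proposition~\ref{t2:varKac(2,2)} produces $0\neq x'\in\fv_{\fg'_1}^{rank}(K(\lambda'))\subseteq\fv_{\fg'_1}^{rank}(K(\lambda))$. An explicit matrix embedding $\fg'_1\hookrightarrow\fg_1$ then yields a nonzero element of $\fv_{\fg_1}^{rank}(K(\lambda))$, and the two-orbit argument finishes. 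In this route the only explicit computation is literally the $n=1$ case already done; your route never leaves $\osp(2|2n)$ and so cannot invoke it.
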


\begin{proof}
We follow the same argument from \cite[Subsection~6.3]{BKN1}. Let $\chi_{\lambda}$ be the central character corresponding to the weight $\lambda$ and let $\F^{\chi_{\lambda}}$ be the corresponding block. \cite[Theorem~2]{GS} implies that $\F^{\chi_{\lambda}}$ is equivalent to the principal block of $\fg'=\osp(2|2)$ containing the trivial module. As discussed in \cite{GS}, the equivalence of these blocks is a composition of translation functors between the blocks of $\F$, followed by a restriction functor $\operatorname{Res}_{\mu}$ from $\fg$ to $\fg'$. Let $M$ be the image of $K(\l)$ under the translation functors. Following the proof of \cite[Theorem~6.3.1]{BKN1}, we can show that upon restricting to $\fg'$ we have, as $\fg'$-modules,
\[M=\operatorname{Res}_{\mu}(M)\oplus G_{\mu}(M),\]
for a $\fg'$-module $G_{\mu}(M)$ (cf. \cite[(4.21)]{BKN4}).  The composition of these functors takes Kac $\fg$-modules to Kac $\fg'$-modules and hence $\operatorname{Res}_{\mu}(M)$ is an atypical Kac $\fg'$-module. By Proposition~\ref{t2:varKac(2,2)} there exists $0\neq x' \in \fv_{\fg'_1}^{rank}(\operatorname{Res}_{\mu}(M))=\fg'_1$. By \cite[Theorem~6.6]{BKN3} we have
\[\fv_{\fg'_1}^{rank}(M)=\fv_{\fg'_1}^{rank}(\operatorname{Res}_{\mu}(M))\cup\fv_{\fg'_1}^{rank}(G_{\mu}(M)).\]
Thus there exists $0\neq x' \in \fg'\subseteq \fv_{\fg'_1}^{rank}(M)$. However, we can embed $\fg'\hookrightarrow\fg$ such that the odd elements $x_1,x_2,y_1,y_2\in \fg'$ given in the previous proof are sent respectively to the odd elements $E_{1,3}-E_{n+3,2},E_{1,n+3}+E_{3,2}, E_{2,3}-E_{n+3,1}, E_{2,n+3}+E_{3,1}\in \fg$. Therefore, we can find $0\neq x \in \fv_{\fg_1}^{rank}(M)$ but the $G_{\0}$-orbits in $\fg_1$ are $\{0\}$ and $\fg_1\setminus\{0\}$, hence $\fv_{\fg_1}^{rank}(M)=\fg_1$. 

Since the mentioned translation functors are equivalences of categories and by the properties of rank variety \cite[Proposition~6.3,\,Theorem~6.6]{BKN3}, we can show that \cite[Proposition~4.3]{BKN4} holds for rank varieties in the case of $\osp(2|2n)$. Thus rank varieties are preserved under these translation functors. Therefore,  $\fv_{\fg_1}^{rank}(K(\lambda))=\fv_{\fg_1}^{rank}(M)=\fg_1$.
\end{proof}

We can now interpret the complexity of Kac modules geometrically as follows:
\begin{cor}
For $\lambda\in \x$, $c_{\F}(K(\lambda))=\dim \fv_{\fg_1}^{rank}(K(\lambda)).$
\end{cor}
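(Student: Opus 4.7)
The plan is to dispose of the statement by a simple two-case analysis, since nearly all the work has already been done in the preceding results. Everything reduces to matching numbers on both sides of the asserted equality.

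First I would dispatch the typical case. If $\atyp(\lambda)=0$, then by the remark following the Kac result (\cite[Theorem~1]{Kac1}) and Proposition~\ref{typcomplexity}, $K(\lambda)$ is projective in $\F$, so $c_{\F}(K(\lambda))=0$. On the other hand, projectivity of $K(\lambda)$ in $\F_{(\fg,\fg_{\0})}$ forces $K(\lambda)$ to be projective over $U(\langle x\rangle)$ for every $x\in\fg_{1}$ with $[x,x]=0$ (here $[x,x]=0$ automatically since $\fg_{1}$ is abelian), so by the rank-variety definition recalled in Subsection~\ref{ss:rankvar}, $\fv_{\fg_1}^{rank}(K(\lambda))=\{0\}$ and hence has dimension $0$. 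The two sides agree.

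Next I would handle the atypical case. If $\atyp(\lambda)=1$, then Theorem~\ref{t1:atypcomplexityKac} gives $c_{\F}(K(\lambda))=2n$, and Theorem~\ref{t:varkac} gives $\fv_{\fg_1}^{rank}(K(\lambda))=\fg_{1}$. All that remains is to identify $\dim\fg_{1}$. Since $\fg_{1}$ is the span of the odd positive root vectors, I would simply count: by Subsection~\ref{ospprelim}, the odd positive roots are $\{\varepsilon_1\pm\delta_i\mid 1\leq i\leq n\}$, a set of size $2n$. Thus $\dim\fg_{1}=2n$, and both sides of the claimed equality equal $2n$.

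There is no serious obstacle here; the corollary is essentially a bookkeeping consequence of Theorems~\ref{t1:atypcomplexityKac} and \ref{t:varkac} together with the dimension count for $\fg_{1}$. The only mildly delicate point is verifying that the rank variety of a projective Kac module is trivial in the typical case, but this is immediate from the fact that projectivity in $\F$ passes to $U(\langle x\rangle)$ via the criterion in \cite[Proposition~5.4]{BKN3}: a projective module in $\F$ contains no trivial direct summand upon restriction to $U(\langle x\rangle)$ for $x\neq 0$, so it is projective over $U(\langle x\rangle)$.
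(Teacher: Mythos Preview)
Your proposal is correct and follows essentially the same two-case argument as the paper: for typical $\lambda$ both sides vanish by projectivity, and for atypical $\lambda$ both sides equal $2n$ by Theorems~\ref{t1:atypcomplexityKac} and~\ref{t:varkac}. The only cosmetic difference is that the paper dispatches the typical case by citing \cite[Theorem~3.5.1]{BKN2} directly for $\dim\fv_{\fg_1}^{rank}(K(\lambda))=0$, whereas you reconstruct this from \cite[Proposition~5.4]{BKN3}; either route is fine.
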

\begin{proof}
If $\l$ is typical, the Kac module $K(\l)$ is projective, then $\dim \fv_{\fg_1}^{rank}(K(\lambda))=0$ by \cite[Theorem~3.5.1]{BKN2} and the result follows by Proposition~\ref{typcomplexity}. If $\l$ is atypical, the result follows from Theorems~\ref{t1:atypcomplexityKac} and \ref{t:varkac}.
\end{proof}

After establishing the equivalence of blocks between $\fg$ and $\fg'$ as given in Theorem~\ref{t:varkac}, we can show that the operator $L$ (cf. Subsection~\ref{SS:simplecomplexity}) is onto:
\begin{prop}\label{Lsurjective}
The operator $L$ is surjective on the set of atypical weights in $\x$.
\end{prop}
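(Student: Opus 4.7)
The plan is to reduce surjectivity of $L$ to an explicit calculation on the principal atypical block $\mathcal{F}^{\chi_0}$, and then transport the statement back to a general atypical block via the Germoni--Serganova equivalence of blocks already invoked in Lemma~\ref{L2:atypcomplexityKac}.

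First, I would give $L$ a purely categorical description. By \cite[Corollary~8]{CWZ}, for atypical $\lambda$ the Kac module $K(\lambda)$ has head $L(\lambda)$ and socle $L(\lambda^L)$, so $\lambda^L$ is characterized as the unique weight in the block of $\lambda$ whose simple module appears as the socle of $K(\lambda)$. In particular, whether $L$ is surjective on an atypical block is determined purely by the abelian structure of the block together with the specification of which objects are the Kac modules.

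Next, I would apply the equivalence $\mathcal{F}^{\chi_\mu}\simeq \mathcal{F}^{\chi_0}$ coming from translation functors (see the proof of Lemma~\ref{L2:atypcomplexityKac}). As noted in the proof of Theorem~\ref{t:varkac}, this equivalence sends Kac modules to Kac modules, so by the categorical description it preserves $L$. Thus $L$ is surjective on $\mathcal{F}^{\chi_\mu}$ if and only if it is surjective on $\mathcal{F}^{\chi_0}$. By Lemma~\ref{L1:principalblock} the principal block is exactly $\{0^{(l)}\mid l\in \Z\}$, and Lemma~\ref{t1:projcover(2,2n)} computes $L$ on this set: parts~(1)--(3) together show that $L$ acts as the shift $0^{(l)}\mapsto 0^{(l+1)}$, which is a bijection of $\Z$ and in particular surjective.

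The only delicate point is the compatibility of the block equivalence with Kac modules, but this is precisely the ingredient the author has already extracted from \cite{GS,DS} for the proof of Theorem~\ref{t:varkac}, so no additional technical work is needed beyond assembling these pieces with the categorical characterization of $L$.
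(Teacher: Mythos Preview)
Your argument is correct and follows the same overall scheme as the paper's proof: characterize $\lambda^L$ categorically as the weight indexing the socle of $K(\lambda)$, observe that a block equivalence preserving Kac modules therefore intertwines the $L$-operators, and then verify surjectivity explicitly on a single block.

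The only genuine difference is the choice of target block. You reduce to the principal block $\mathcal{F}^{\chi_0}$ of $\osp(2|2n)$ itself, where Lemma~\ref{L1:principalblock} and Lemma~\ref{t1:projcover(2,2n)} identify the weights with $\Z$ and $L$ with the shift $l\mapsto l+1$. The paper instead reduces all the way down to $\osp(2|2)$ via the equivalence of \cite[Theorem~2]{GS} (translation functors followed by restriction), checks surjectivity there by hand, and then lifts back. Your route is marginally more economical, since it uses only the internal translation-functor equivalences already established in Lemma~\ref{L2:atypcomplexityKac} and avoids the restriction step; the paper's route has the small advantage that the base case in $\osp(2|2)$ is trivially short. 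One caution on your citation: the sentence in the proof of Theorem~\ref{t:varkac} about Kac modules going to Kac modules refers to the equivalence with $\osp(2|2)$, not to the translation-functor equivalences between blocks of $\osp(2|2n)$ that you actually use; the latter fact is also true (and is what underlies the former), but you should cite \cite{GS} directly for it rather than Theorem~\ref{t:varkac}.
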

\begin{proof}
First, we will show that $L$ is onto in the case of $\fg'=\osp(2|2)$. Let $\mu$ be an atypical weight, then $\mu=(d+2|d)$ or $\mu=(-d|d)$ for some $d\geq 0$. By Lemma~\ref{t1:projcover(2,2n)}, $(0|0)=(2|0)^L$, $(d+2|d)=(d+3|d+1)^L$ for any $d\geq 0$ and $(-d|d)=(-d+1|d-1)^L$ for any $d\geq 1$. Thus $\mu$ is the image under $L$ of some atypical weight.

Now let $\mu$ be an atypical weight in the case of $\fg=\osp(2|2n)$. Let $L(\mu')$ be the image of the simple module $L(\mu)$ under the above equivalence of blocks between $\fg$ and $\fg'$. Then $\mu'$ is an atypical weight for $\fg'$. Thus there exists an atypical weight $\lambda'$ with $\lambda'^L=\mu'$. The head of $K(\lambda')$ is $L(\lambda')$ and the socle is $L(\lambda'^L)=L(\mu')$. This Kac module corresponds to a Kac module $K(\lambda)$ (where $\lambda$ corresponds to $\lambda'$ under the same equivalence) which has $L(\lambda)$ as its head and $L(\mu)$ as its socle. But the socle of $K(\lambda)$ is $L(\lambda^L)$, thus $L(\lambda^L)\cong L(\mu)$ which shows $\mu=\lambda^L$ for some atypical $\lambda$.
\end{proof}

We obtain a uniform geometric interpretation of the complexity of the Kac and simple modules by combining support and associated varieties. These varieties were defined in Subsections~\ref{suppvar} and \ref{assocvar}. First, we find the dimensions of the associated varieties of the simple and Kac modules:

\begin{prop}\label{assocvarsimple}
For $\lambda\in \x$, let $X(\lambda)$ be $L(\lambda)$ or $K(\lambda)$, then
 $\dim \X_{X(\lambda)}=2n,0$ if $\atyp(\lambda)=1,0$ respectively.
\end{prop}

\begin{proof}
When $\lambda$ is typical, $L(\lambda)$ and $K(\lambda)$ are projective, hence $\dim \X_{X(\lambda)}=0$ by \cite[Theorem~3.4]{DS}. For the rest of the proof, assume $\lambda$ is atypical. \cite[Corollary~2.5]{S} implies that $ \X_{L(\lambda)}=\X$ when $\lambda$ is atypical. In $\osp(2|2n)$, $\X$ has two irreducible components each with dimension equal to $\displaystyle{\frac{\dim \fg_{\1}}{2}}=2n$. Thus $\dim \X=2n$ \cite[Corollary~4.8,~4.9]{DS}.

Now assume $X(\lambda)=K(\lambda)$. From the definitions of the rank variety and the associated variety, we have
$\fv_{\fg_1}^{rank}(K(\lambda))=\X_{K(\lambda)}\cap \fg_1\subseteq \X_{K(\lambda)}$. Following the same proof of \cite[Theorem~6.4.1]{BKN1}, we show that the inclusion is in fact an equality. Thus $\dim \X_{K(\lambda)}=\dim \fv_{\fg_1}^{rank}(K(\lambda))=\dim \fg_1=2n$,   (using Theorem~\ref{t:varkac}).
\end{proof}

Second, we give the dimensions of the support varieties of the simple and the Kac modules:
\begin{prop}\label{suppvarsimpleKac}
	For $\lambda\in \x$,
	\begin{itemize}
	\item  $\dim \fv_{(\fg,\fg_{\0})}(L(\lambda))=\atyp(\lambda)$, \cite[Corollary~4.4.2]{kujawa}.
	\item   $\dim \fv_{(\fg,\fg_{\0})}(K(\lambda))=0$, \cite[Corollary~3.2]{BKN4}.
	\end{itemize}
\end{prop}

Combining the results from Propositions~\ref{typcomplexity}, \ref{assocvarsimple}, \ref{suppvarsimpleKac} and Theorems~\ref{t2:atypcomplexitysimple}, \ref{t1:atypcomplexityKac} we conclude that:
\begin{thm}\label{ospgeometric1}
For $\lambda\in \x$, let $X(\lambda)$ be $L(\lambda)$ or $K(\lambda)$, then $$c_{\F}(X(\lambda))=\dim \X_{X(\lambda)}+\dim \fv_{(\fg,\fg_{\0})}(X(\lambda)).$$
\end{thm}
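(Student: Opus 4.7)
The plan is a case analysis based on the two values the atypicality can take, since every ingredient on both sides of the asserted equality has already been computed earlier in the paper; the work is purely bookkeeping.

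First I would dispose of the typical case. If $\atyp(\lambda)=0$, then by \cite[Theorem~1]{Kac1} we have $L(\lambda)=K(\lambda)=P(\lambda)$ so $X(\lambda)$ is projective. Proposition~\ref{typcomplexity} gives $c_{\F}(X(\lambda))=0$. Projectivity also forces $\dim \fv_{(\fg,\fg_{\0})}(X(\lambda))=0$ (via \cite[Cor~4.4.2]{kujawa} for the simple module and \cite[Cor~3.2]{BKN4} for the Kac module, both of which evaluate to $\atyp(\lambda)$ and $0$ respectively, or directly from the definition of support variety), and Lemma~\ref{assocvarsimple} gives $\dim \X_{X(\lambda)}=0$. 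Hence the identity $0=0+0$ holds trivially in this case.

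Next I would handle the atypical case, splitting into the simple and Kac subcases. For $X(\lambda)=L(\lambda)$ with $\atyp(\lambda)=1$, Theorem~\ref{t2:atypcomplexitysimple} yields $c_{\F}(L(\lambda))=2n+1$; Lemma~\ref{assocvarsimple} yields $\dim \X_{L(\lambda)}=2n$; and the cited \cite[Cor~4.4.2]{kujawa} yields $\dim \fv_{(\fg,\fg_{\0})}(L(\lambda))=\atyp(\lambda)=1$. The equality $2n+1=2n+1$ follows. For $X(\lambda)=K(\lambda)$ with $\atyp(\lambda)=1$, Theorem~\ref{t1:atypcomplexityKac} yields $c_{\F}(K(\lambda))=2n$; Lemma~\ref{assocvarsimple} yields $\dim \X_{K(\lambda)}=2n$; and \cite[Cor~3.2]{BKN4} yields $\dim \fv_{(\fg,\fg_{\0})}(K(\lambda))=0$. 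The equality $2n=2n+0$ follows.

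There is no genuine obstacle here: the theorem is an \emph{a posteriori} assembly statement that organizes the individual computations of Sections~3 and~4 into the clean geometric form \eqref{compinterpret}. The only thing to be careful about is that the dimension of the associated variety of a Kac module is not immediate from the definition (one needs Lemma~\ref{assocvarsimple}, whose proof leverages Theorem~\ref{t:varkac} on the rank variety and the inclusion $\fv_{\fg_1}^{rank}(K(\lambda))=\X_{K(\lambda)}\cap\fg_1\subseteq \X_{K(\lambda)}$ upgraded to equality as in \cite[Theorem~6.4.1]{BKN1}); otherwise the proof reduces to citation and arithmetic, and I would present it as a short table or as two labeled cases.
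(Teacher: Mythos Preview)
Your proposal is correct and is exactly the paper's approach: the theorem is stated immediately after Lemma~\ref{assocvarsimple} with the phrase ``Combining the computations about the complexity, support variety, and the associated variety of the simple and Kac modules, we can conclude that:'' and no further argument is given. Your case analysis simply spells out that combination, citing precisely the same inputs (Theorems~\ref{t2:atypcomplexitysimple} and~\ref{t1:atypcomplexityKac}, Lemma~\ref{assocvarsimple}, \cite[Cor.~4.4.2]{kujawa}, \cite[Cor.~3.2]{BKN4}).
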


\section{$z$-complexity over $\osp(2|2n)$}\label{zcomplexity}
Let $\fg=\osp(2|2n)$. The $z$-complexity is defined in Subsection~\ref{zcompdef}. 
\subsection{$z$-complexity of simple modules}
We start by computing the $z$-complexity of the trivial $\osp(2|2n)$-module.
\begin{lem}\label{t1:zcompsimple}
 $z_{\F}(\C)=2$.
\end{lem}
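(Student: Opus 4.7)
The plan is to exploit the explicit minimal projective resolution of the trivial module already constructed in the proof of Theorem~\ref{t2:atypcomplexitysimple} and then invoke the characterization $z_{\F}(M) = s(\p)$ from Subsection~\ref{zcompdef}, which says that the $z$-complexity equals the rate of growth of the number of indecomposable summands appearing at each step of a minimal projective resolution $\p \twoheadrightarrow M$.

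First, I would recall the explicit form of the minimal projective resolution
\[
\cdots \to P_d \xrightarrow{f_d} \cdots \xrightarrow{f_1} P(0) \xrightarrow{f_0} L(0) \to 0
\]
from the proof of Theorem~\ref{t2:atypcomplexitysimple}. For even $d$, the module $P_d$ is the direct sum
\[
P^{(-d)} \oplus P^{(-d+2)} \oplus \cdots \oplus P^{(-2)} \oplus P^{(0)} \oplus P^{(d)} \oplus P^{(d-2)} \oplus \cdots \oplus P^{(2)},
\]
a sum of $\tfrac{d}{2}+1$ summands on the nonpositive side and $\tfrac{d}{2}$ on the positive side; for odd $d$, an analogous count gives $\tfrac{d+1}{2}$ summands on each side. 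In both cases the number of indecomposable direct summands of $P_d$ is exactly $d+1$, since each projective cover $P^{(i)} = P(0^{(i)})$ is indecomposable.

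Next, I would apply the definition of rate of growth to the sequence $n_d := d+1$. Since $n_d$ is bounded above by a linear polynomial in $d$ but not by any constant, the smallest integer $c$ for which $n_d \leq C \cdot d^{c-1}$ holds for some constant $C$ and all $d$ is $c = 2$. Hence $s(\p) = 2$, and the identification $z_{\F}(\C) = s(\p)$ from Subsection~\ref{zcompdef} yields $z_{\F}(\C) = 2$.

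There is no real obstacle to this proof once the minimal projective resolution is in hand; the only mildly technical point is ensuring that the summands $P^{(i)}$ are genuinely indecomposable (they are, since each is the projective cover $P(0^{(i)})$ of the simple module $L(0^{(i)})$) and that no further cancellation or common summands appear in the description of $P_d$, which is immediate from the disjointness of the indices $i$ appearing in the formula. Counting then replaces any Ext computation entirely.
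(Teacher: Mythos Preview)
Your proof is correct and follows essentially the same approach as the paper: both count the indecomposable summands in each term $P_d$ of the minimal projective resolution from Theorem~\ref{t2:atypcomplexitysimple} and observe that this count grows linearly in $d$, yielding $s(\p)=2$. Your version is simply more explicit, giving the exact count $d+1$ rather than just noting that the number is a degree-one polynomial in $d$.
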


\begin{proof}
The proof follows directly by counting the number of summands in each $P_d$ in the resolution \eqref{projrestrivial} of the trivial module. By doing so, we can see that the number of summands in each $P_d$ is a polynomial in $d$ of degree $1$. Hence, the rate of growth of this number is $s(\p)=2$. The result follows.
\end{proof}

To find the $z$-complexity of all atypical simple modules, we will show that simple modules of the same atypicality have the same $z$-complexity.

\begin{lem}\label{l:bdkac}
For any module $X\in \F$ and an atypical Kac module $K(\lambda)$, there exists a constant $D_X$ depending only on $X$ such that
\[K(\lambda)\otimes X\cong \bigoplus_{\gamma\in I}K(\gamma),\]
where $|I|\leq D_X$.
\end{lem}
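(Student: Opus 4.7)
The plan is to exploit the Type I structure of $\fg = \osp(2|2n)$. Since $K(\lambda) = U(\fg) \otimes_{U(\fp^+)} L_{\0}(\lambda)$ with $\fp^+ = \fg_{\0} \oplus \fg_1$, the Hopf-superalgebra structure on $U(\fg)$ yields the tensor identity
\[
K(\lambda) \otimes X \;\cong\; U(\fg) \otimes_{U(\fp^+)} \bigl(L_{\0}(\lambda) \otimes X|_{\fp^+}\bigr),
\]
reducing the problem to analyzing the $\fp^+$-module $V := L_{\0}(\lambda) \otimes X|_{\fp^+}$.

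Next I would produce a $\fp^+$-filtration of $V$ whose subquotients are simple $\fg_{\0}$-modules inflated trivially to $\fp^+$. The key observation is that $\fg_1$ is an abelian ideal of $\fp^+$ acting nilpotently on every finite-dimensional $\fp^+$-module, so $V^{\fg_1}$ is a nonzero $\fg_{\0}$-submodule on which $\fg_1$ acts by zero; picking a simple $\fg_{\0}$-summand of it and iterating on the quotient yields the filtration. Because $U(\fg)$ is free over $U(\fp^+)$ by PBW, induction $U(\fg)\otimes_{U(\fp^+)}(-)$ is exact, so the filtration lifts to a $\fg$-flag of $K(\lambda)\otimes X$ with Kac subquotients $K(\mu_1), \ldots, K(\mu_k)$. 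To upgrade this flag to an honest direct sum $\bigoplus_{j=1}^k K(\mu_j)$, I would use the vanishing of $\Ext^1$ between Kac modules lying in distinct central characters, together with the explicit principal-block structure from Subsection~\ref{ss:diagramsfunctors}, in order to split the flag block by block.

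Finally I would bound $k$ uniformly in $\lambda$. The length of the flag equals the number of simple $\fg_{\0}$-constituents (with multiplicity) of $L_{\0}(\lambda)\otimes X|_{\fg_{\0}}$. Writing $X|_{\fg_{\0}} = \bigoplus_\nu L_{\0}(\nu)^{n_\nu}$, it suffices to show that for any simple $\fg_{\0}$-module $L_{\0}(\lambda)$ the number of simple summands of $L_{\0}(\lambda)\otimes L_{\0}(\nu)$ is at most $\dim L_{\0}(\nu)$, independently of $\lambda$. For reductive $\fg_{\0}$ this follows by observing that the projection of the space of highest-weight vectors $\bigl(L_{\0}(\lambda)\otimes L_{\0}(\nu)\bigr)^{N_{\0}}$ onto the one-dimensional highest-weight line of $L_{\0}(\lambda)$ tensored with $L_{\0}(\nu)$ is injective: a nonzero kernel element, decomposed by the weight of its $L_{\0}(\lambda)$-factor, would have a top-weight component annihilated by every positive simple root on the $L_{\0}(\lambda)$-side, forcing it into $L_{\0}(\lambda)^{N_{\0}} \otimes L_{\0}(\nu) = (\text{highest weight line}) \otimes L_{\0}(\nu)$ and contradicting the kernel hypothesis. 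Summing gives $k \le \sum_\nu n_\nu \dim L_{\0}(\nu) = \dim X$, so $D_X := \dim X$ works. I expect the direct-sum refinement in the middle step to be the main obstacle; the Kac-flag version is essentially classical for Type I, but promoting a Kac flag to a genuine direct sum demands additional block-theoretic input, and if only a bound on the number of Kac subquotients is needed for the $z$-complexity arguments that follow, this splitting step can be bypassed entirely.
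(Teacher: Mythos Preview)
Your tensor-identity and $\fp^+$-filtration argument is the same skeleton the paper uses, and your bound $D_X=\dim X$ via the injection of highest-weight vectors into $v_\lambda\otimes L_{\0}(\nu)$ is a clean, more elementary substitute for the paper's generalized Littlewood--Richardson count $D_X=\sum_{\mu\in I_X}(2n)^{l(\mu)}$. The paper differs from you only in that, after decomposing $L_{\0}(\lambda)\otimes X$ as a semisimple $\fg_{\0}$-module, it applies $U(\fg)\otimes_{U(\fp^+)}-$ and declares a direct sum of Kac modules; this silently replaces the genuine $\fp^+$-structure on $L_{\0}(\lambda)\otimes X|_{\fp^+}$ (on which $\fg_1$ acts nontrivially through $X$) by the inflated $\fg_{\0}$-structure, and so really only yields the Kac \emph{filtration} you obtain.

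Your hesitation about upgrading the flag to a direct sum is fully warranted: that step cannot be completed. Within a single block one has $\Ext^1(K^{(i)},K^{(i-1)})\neq 0$ (the projective $P^{(i)}$ realises the extension), so block-by-block splitting fails; and in fact the direct-sum conclusion is false in general. For $\fg=\osp(2|2)$ the module $K(0)\otimes P(0)$ is projective (one tensor factor is projective), yet its Kac-filtration subquotients include the atypical, non-projective $K(0)$, so it cannot be a direct sum of those Kac modules --- its principal-block component is $P^{(0)}\oplus P^{(1)}$. As you already note, only the Kac-flag statement with the uniform bound $|I|\le D_X$ is used in the subsequent Corollary and in the $z$-complexity argument, so your filtration version is precisely what is required.
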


\begin{proof}
By the definition of Kac modules, we have
\[K(\lambda)=U(g)\otimes_{U(\fp^+)}L_{\0}(\lambda).\]
As a $\fg_{\0}$-module, $L_{\0}(\lambda)\otimes X$ decomposes into a direct sum $\bigoplus_{\mu\in I_X} L_{\0}(\lambda)\otimes L_{\0}(\mu)$, where $I_X$ is a finite indexing set depending only on $X$. By the generalized Littlewood-Richardson formula \cite[Subsection~A.4]{L}, each summand $L_{\0}(\lambda)\otimes L_{\0}(\mu)$ decomposes into a finite direct sum $\bigoplus_{\gamma\in I_{\lambda,\mu}} L_{\0}(\gamma)$ indexed by the $\mathfrak{sp}(2n)$-standard Young tableaux of shape $p(\mu)$ which are $\lambda$-dominant (see \cite[Subsection~A.4]{L} for definitions). Let $l(\mu)$ be the number of boxes in $p(\mu)$, then
 \[|I_{\lambda,\mu}|\leq (2n)^{l(\mu)}.\]
  By using the functor $U(g)\otimes_{U(\fp^+)}-$, we have the following isomorphisms as $U(\fg)$-modules:
\[K(\lambda)\otimes X\cong \bigoplus_{\mu\in I_X}\bigoplus_{\gamma\in I_{\lambda,\mu}} U(g)\otimes_{U(\fp^+)}L_{\0}(\gamma)\cong \bigoplus_{\mu\in I_X}\bigoplus_{\gamma\in I_{\lambda,\mu}} K(\gamma).\]
The number of summands is at most $D_X:=\sum_{\mu\in I_X}(2n)^{l(\mu)}$. This completes the proof.
\end{proof}

\begin{cor}
Let $\lambda\in \x$ be an atypical weight and $P(\lambda)$ be the projective cover of $L(\lambda)$.  For any $X\in \F$, there exists a constant $E_X$ depending only on $X$ such that
\[P(\lambda)\otimes X\cong \bigoplus_{\alpha\in J}P(\alpha),\]
where $|J|\leq E_X$.
\end{cor}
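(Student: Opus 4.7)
The plan is to reduce the statement to the preceding lemma by exploiting two structural facts about $\F$: (i) the tensor product of a projective module with any object of $\F$ is again projective, and (ii) every indecomposable projective $P(\alpha)$ admits a short Kac flag (of length $1$ if $\alpha$ is typical, and of length $2$ if $\alpha$ is atypical, as recorded in the $2$-step flag $0\to K(\lambda)\to P(\lambda^L)\to K(\lambda^L)\to 0$ from Subsection~\ref{SS:simplecomplexity}).

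First I would observe that $P(\lambda)\otimes X$ is projective in $\F$, so by the Krull--Schmidt theorem it decomposes as a direct sum of indecomposable projectives $P(\alpha)$; the task is only to bound the number of summands. Next, since $-\otimes X$ is exact, I would apply it to the $2$-step Kac flag of $P(\lambda)$ to produce a filtration of $P(\lambda)\otimes X$ whose successive quotients are of the form $K(\mu_i)\otimes X$ with $i=1,2$ (or just $i=1$ in the typical case). Invoking the preceding lemma, each factor $K(\mu_i)\otimes X$ splits as a direct sum of at most $D_X$ Kac modules, and refining the filtration accordingly yields a Kac flag of $P(\lambda)\otimes X$ containing at most $2D_X$ Kac modules.

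The last step is to translate ``length of a Kac flag'' into ``number of indecomposable projective summands''. Since every indecomposable projective $P(\alpha)\in\F$ has a Kac flag of length exactly $1$ or $2$, and in particular of length $\geq 1$, the additivity of Kac-flag length across direct sums forces
\[ \#\{\text{indecomposable summands of }P(\lambda)\otimes X\}\ \leq\ \#\{\text{Kac modules in any Kac flag of }P(\lambda)\otimes X\}\ \leq\ 2D_X. \]
Taking $E_X:=2D_X$, which depends only on $X$, then gives the conclusion.

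The only delicate point is the opening observation that $P(\lambda)\otimes X$ is projective in $\F$; this is standard for the relative category $\F_{(\fg,\fg_{\0})}$ because $\F$ is a symmetric monoidal category in which projectivity is characterized by $(\fg,\fg_{\0})$-relative freeness (see \cite[Section~2]{BKN2}), and the claim then follows by rewriting $P(\lambda)\otimes X \cong U(\fg)\otimes_{U(\fg_{\0})}(V\otimes X)$ for the appropriate $\fg_{\0}$-summand $V$. Once this is in place, the filtration-and-counting argument above is routine and delivers the desired bound.
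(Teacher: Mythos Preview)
Your proposal is correct and follows essentially the same route as the paper: tensor the short Kac flag of $P(\lambda)$ by $X$, apply the preceding lemma to bound the number of Kac factors by $2D_X$, and then bound the number of indecomposable projective summands by the total Kac-flag length. The only point the paper makes more explicit is the well-definedness of Kac-flag length (it cites \cite[Proposition~3.3]{BKN4}, which identifies the multiplicity of $K(\mu)$ in any Kac filtration of $M$ with $\dim\Hom(M,K'(\mu))$); you invoke this implicitly when asserting ``additivity of Kac-flag length across direct sums'', so you may want to add that reference.
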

\begin{proof}
Let $\lambda'$ be the atypical weight as in Proposition~\ref{Lsurjective} satisfying $\lambda'^L=\lambda$. By the Kac flag \eqref{KacFlag}, we have:
\[0\rightarrow K(\lambda')\rightarrow P(\lambda'^L)\rightarrow K(\lambda'^L)\rightarrow 0.\]
Thus
\[0\rightarrow K(\lambda')\rightarrow P(\lambda)\rightarrow K(\lambda)\rightarrow 0.\]
 This Kac filtration of $P(\l)$ together with the exactness of the tensor functor (over $\C$) imply the exact sequence:
\[0\rightarrow K(\lambda')\otimes X\rightarrow P(\lambda)\otimes X\rightarrow K(\lambda)\otimes X\rightarrow 0.\]
The factors $K(\lambda)\otimes X$ and $K(\lambda')\otimes X$ decompose into Kac modules by the previous lemma. Thus $P(\lambda)\otimes X$ has a Kac filtration. By Lemma~\ref{l:bdkac} there exists a constant $D_X$ such that the number of Kac modules in this filtration is at most $E_X:=2D_X$. On the other hand, the projective module $P(\lambda)\otimes X$ decomposes into a finite direct sum of projective indecomposables $\bigoplus_{\alpha\in J} P(\alpha)$ where each summand has a Kac filtration. However Kac filtrations of the same module will have the same number of Kac modules. In fact, by \cite[Proposition~3.3]{BKN4}, the number of times a Kac module $K(\mu)$ appears in a Kac filtration of a module $M$ is equal to $\dim \Hom(M, K'(\mu))$, where $K'(\mu)$ is the dual Kac module (cf. \cite[Subsection~3.4]{BKN4}). This dimension is not dependent on the choice of the filtration. Thus the number of the projective indecomposables $P(\alpha)$ is bounded by $E_X$.
\end{proof}

\begin{lem}\label{t:zcomp}
Let $M,\, N,\,X,\, T \in \F$ such that $M\otimes X\cong N\oplus T$. Then $z_{\F}(M)\geq z_{\F}(N)$.
\end{lem}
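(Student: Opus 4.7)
The plan is to use the minimal projective resolution of $M$ to manufacture a projective (not necessarily minimal) resolution of $N$ whose growth we can control, and then invoke the standard fact that the minimal resolution has the fewest indecomposable summands at each step. First, let $\p \twoheadrightarrow M$ be a minimal projective resolution, so that if $a_d$ denotes the number of indecomposable summands of $P_d$, then $r(\{a_d\}) = s(\p) = z_{\F}(M)$. Since $X \in \F$ is finite-dimensional, tensoring with $X$ is exact and preserves projectivity, so $P_\bullet \otimes X \twoheadrightarrow M \otimes X$ is a projective resolution.

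Next, I would count summands step by step. By the preceding corollary, writing $P_d = \bigoplus_i P(\lambda_i)$ we have $P_d \otimes X \cong \bigoplus_i \bigl(P(\lambda_i) \otimes X\bigr)$, and each $P(\lambda_i) \otimes X$ decomposes into at most $E_X$ projective indecomposables. Hence the number of indecomposable summands of $P_d \otimes X$ is bounded above by $E_X \cdot a_d$.

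Now I would use the hypothesis $M \otimes X \cong N \oplus T$. Take minimal projective resolutions $P^N_\bullet \twoheadrightarrow N$ and $P^T_\bullet \twoheadrightarrow T$; their direct sum $P^N_\bullet \oplus P^T_\bullet$ is a minimal projective resolution of $N \oplus T \cong M \otimes X$. Comparing to the (not necessarily minimal) resolution $P_\bullet \otimes X$ of the same module, the standard uniqueness theorem for minimal projective resolutions over self-injective categories (the minimal resolution is a direct summand of any projective resolution) implies that the number of indecomposable summands of $P^N_d \oplus P^T_d$ is at most that of $P_d \otimes X$. Letting $b_d$ and $c_d$ denote the number of indecomposable summands of $P^N_d$ and $P^T_d$ respectively, we therefore obtain
\[
b_d \;\leq\; b_d + c_d \;\leq\; E_X \cdot a_d
\]
for all $d$. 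Consequently $r(\{b_d\}) \leq r(\{a_d\})$, i.e.\ $z_{\F}(N) \leq z_{\F}(M)$, as desired.

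The main obstacle is justifying that $b_d + c_d \leq$ (number of summands of $P_d \otimes X$). This rests on the fact that in $\F$ the minimal projective resolution is unique up to isomorphism and splits off of any projective resolution, which follows from self-injectivity of $\F$ (\cite{BKN2}) and Fitting-type arguments on projective covers. Everything else is formal: exactness of $-\otimes X$, projectivity of $P \otimes X$ for $P$ projective and $X$ finite-dimensional, and the summand bound already established in the preceding corollary.
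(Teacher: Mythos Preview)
Your proof is correct and follows essentially the same approach as the paper: tensor the minimal projective resolution of $M$ by $X$, bound the number of indecomposable summands using the preceding corollary, and then compare with the minimal resolution of $M\otimes X\cong N\oplus T$ via the standard fact that the minimal resolution is a summand of any projective resolution. The paper organizes this slightly differently by factoring through the intermediate inequality $z_{\F}(M)\geq z_{\F}(M\otimes X)\geq z_{\F}(N)$ (using $z_{\F}(N\oplus T)=\max(z_{\F}(N),z_{\F}(T))$), but the content is the same.
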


\begin{proof}
Let $\p\rightarrow M\otimes X$ and $Q_\bullet \rightarrow M$ be the minimal projective resolutions of $M\otimes X$ and $M$ respectively. Then $Q_\bullet\otimes X\rightarrow M\otimes X$ is a projective resolution of $M\otimes X$. Using the above corollary, the $d$th term in this resolution decomposes as follows:
\[Q_d\otimes X \cong \Bigg(\bigoplus_{\lambda\in R_d} P(\lambda)\Bigg)\otimes X\cong \bigoplus_{\lambda\in R_d}\bigoplus_{\alpha\in J}P(\a),\]
where $|J|\leq E_X$. Thus, for each $d$, the number of summands in $Q_d\otimes X$ is at most $|R_d|\cdot E_X$ which is a constant multiple of the number of summands in $Q_d$. Recall that $s(\p)$ denotes the rate of growth of the number of summands in $P_d$, the $d$th term in the resolution $\p$, then by minimality of the resolution, we have $s(\p)\leq s(Q_\bullet\otimes X)$. Thus,
\[z_{\F}(M\otimes X)=s(\p) \leq s(Q_\bullet\otimes X) = s(Q_\bullet)= z_{\F}(M).\]

By using the definition of the $z$-complexity we can easily show that
\[z_{\F}(N\oplus T)=\operatorname{max}\Big(z_{\F}(N),\,z_{\F}(T)\Big).\]
Then
\[z_{\F}(M)\geq z_{\F}(M\otimes X)=z_{\F}(N\oplus T) \geq z_{\F}(N).\]
\end{proof}

\begin{thm}\label{t2:zcompsimple}
Let $\lambda \in \x$.
\begin{enumerate}
\item [(1)] If $\lambda$ is typical, then $z_{\F}(L(\lambda))=0$.\\
\item [(2)] If $\l$ is atypical, then $z_{\F}(L(\lambda))=2$.
\end{enumerate}
\end{thm}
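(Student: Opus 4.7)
The typical case is immediate: if $\atyp(\lambda)=0$ then $L(\lambda)=P(\lambda)$ is projective by \cite[Theorem~1]{Kac1}, so its minimal projective resolution is concentrated in degree zero and $z_{\F}(L(\lambda))=0$. So the plan is to prove the atypical case by sandwiching $z_{\F}(L(\lambda))$ between $2$ and $2$: an easy upper bound from Lemma~\ref{t:zcomp} together with $z_{\F}(\C)=2$, and a lower bound obtained by transporting the problem into the principal block via translation functors and then reading off the number of indecomposable summands from an explicit projective resolution.

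For the upper bound, I would apply Lemma~\ref{t:zcomp} with $M=\C$, $X=L(\lambda)$, $N=L(\lambda)$, and $T=0$; the hypothesis $M\otimes X\cong N\oplus T$ is just $\C\otimes L(\lambda)\cong L(\lambda)$, and Lemma~\ref{t1:zcompsimple} gives $z_{\F}(L(\lambda))\leq z_{\F}(\C)=2$. This uses no structural information about $L(\lambda)$ beyond atypicality.

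For the lower bound, the strategy is to reduce to the principal block. By the block characterization of \cite[Section~5]{GS} (see Corollary~\ref{c1:blocks} and Lemma~\ref{L1:principalblock}), the translation functors from Subsection~\ref{ss:diagramsfunctors} furnish an equivalence $\F^{\chi_{\lambda}}\simeq \F^{\chi_{0}}$ sending $L(\lambda)$ to some $L(0^{(l)})$. Since an equivalence of categories sends projective covers to projective covers and indecomposable modules to indecomposable modules, it carries the minimal projective resolution of $L(\lambda)$ termwise to that of $L(0^{(l)})$ and preserves the count of indecomposable summands in each term; thus $z_{\F}(L(\lambda))=z_{\F}(L(0^{(l)}))$, by the same reasoning as in the proof of Lemma~\ref{L2:atypcomplexityKac}. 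Within $\F^{\chi_{0}}$ the projective covers $P^{(i)}$ have the four-layered structure exhibited in Subsection~\ref{SS:simplecomplexity}, and I would mimic the inductive construction from the proof of Theorem~\ref{t2:atypcomplexitysimple}, but with $[l]$ replacing $[0]$ as the head: the $d$-th term $P_d$ then splits as a sum of $d+1$ projective indecomposables of the form $P^{(l-d)}\oplus P^{(l-d+2)}\oplus\cdots\oplus P^{(l+d)}$, and the kernel at stage $d$ has the same shape as in the trivial-module case, just shifted by $l$. So the number of summands in $P_d$ is linear in $d$, giving $s(\p)=2$ and $z_{\F}(L(0^{(l)}))=2$.

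The main obstacle, as in the proof of Theorem~\ref{t2:atypcomplexitysimple}, is verifying that the inductively-defined resolution in the principal block really is minimal and that the kernels line up as depicted. This is the same diagrammatic bookkeeping performed there (based on \cite[Section~10.3]{BC}), and translates verbatim once one checks that the only non-trivial $\Ext^1$ between simples in $\F^{\chi_{0}}$ are those listed in Subsection~\ref{SS:simplecomplexity}; so the shift by $l$ does not introduce any new ambiguity. Everything else (typicality, the upper bound, and block equivalence) is a short application of results already in hand.
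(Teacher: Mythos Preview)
Your argument is correct, but it differs from the paper's in how the two inequalities are obtained. The paper applies \cite[Corollary~3.2.2]{kujawa} symmetrically: for any two simples $L(\lambda)$, $L(\mu)$ of the same atypicality there exist modules $X_1,X_2$ with $L(\mu)\mid L(\lambda)\otimes X_1$ and $L(\lambda)\mid L(\mu)\otimes X_2$, so Lemma~\ref{t:zcomp} yields $z_{\F}(L(\lambda))=z_{\F}(L(\mu))$ in one stroke; taking $\mu=0$ and invoking Lemma~\ref{t1:zcompsimple} finishes. Your upper bound is in fact slicker than the paper's (it needs only the tautology $\C\otimes L(\lambda)\cong L(\lambda)$, not \cite{kujawa}), but your lower bound is more laborious: you pass through the block equivalence of \cite{GS} and then rerun the diagrammatic construction of the minimal resolution with the origin shifted to $[l]$. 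That works because the diamond structure of the $P^{(i)}$ is uniform in $i$, so the resolution of $L(0^{(l)})$ is literally the $l$-shift of the one for $L(0)$ and has $d+1$ indecomposable summands in degree $d$. The trade-off is that the paper's route is shorter and avoids redoing the resolution, while yours avoids importing the tensor-summand result from \cite{kujawa} at the cost of a second appeal to the explicit combinatorics of $\F^{\chi_0}$.
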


\begin{proof}
If $\lambda$ is typical, then $L(\lambda)$ is projective, and hence $z_{\F}(L(\lambda))=0$. Let $\l$ be atypical, then \cite[Corollary~3.2.2]{kujawa} implies that if $\lambda$ and $\mu$ have the same atypicality, there are modules $X_1$ and $X_2$ such that $L(\mu)$ is a direct summand of $L(\lambda)\otimes X_1$ and $L(\lambda)$ is a direct summand of $L(\mu)\otimes X_2$. Then Lemma~\ref{t:zcomp} shows that $z_{\F}(L(\lambda))=z_{\F}(L(\mu))$. Thus, for an atypical $\lambda$, $z_{\F}(L(\lambda))=z_{\F}(\C)=2$.
\end{proof}

\subsection{$z$-complexity of Kac modules}
\begin{thm}\label{t:zcompkac}
Let $\lambda \in \x$.
\begin{enumerate}
\item [(1)] If $\lambda$ is typical, then $z_{\F}(K(\lambda))=0$.\\
\item [(2)] If $\lambda$ is atypical, then $z_{\F}(K(\lambda))=1$.
\end{enumerate}
\end{thm}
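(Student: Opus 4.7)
The plan is to mirror, at the level of numbers of indecomposable summands rather than total dimensions, the three-step reduction that worked for ordinary complexity in Subsection~\ref{compKac}. The typical case is immediate: if $\atyp(\lambda)=0$, then $K(\lambda)$ is projective by \cite[Theorem~1]{Kac1}, so its minimal projective resolution is $K(\lambda)\to K(\lambda)\to 0$ and $z_{\F}(K(\lambda))=0$. From now on assume $\atyp(\lambda)=1$.

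First I would compute $z_{\F}(K(0))$ directly from the resolution used in the proof of Proposition~\ref{comptrivialKac}, namely
\[\ldots\rightarrow P^{(-2)}\rightarrow P^{(-1)}\rightarrow P(0)\rightarrow K(0)\rightarrow 0.\]
Each term $P_{d}=P^{(-d)}$ is a single indecomposable projective, so the number of summands at degree $d$ is the constant sequence $1$; its rate of growth is $1$. Hence $z_{\F}(K(0))=1$.

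Next I would show that $L$-translation preserves the $z$-complexity, i.e.\ $z_{\F}(K(\lambda))=z_{\F}(K(\lambda^{L}))$ for every atypical $\lambda$. The argument is the analogue of Lemma~\ref{L1:atypcomplexityKac}: apply $\Hom_{(\fg,\fg_{\0})}(-,\bigoplus S)$ (sum over all simples) to the $2$-step Kac flag $0\to K(\lambda)\to P(\lambda^{L})\to K(\lambda^{L})\to 0$. Since $P(\lambda^{L})$ is projective, the resulting long exact sequence yields isomorphisms $\Ext^{d}(K(\lambda^{L}),\bigoplus S)\cong \Ext^{d+1}(K(\lambda),\bigoplus S)$ for all $d\geq 1$, so the two Ext-sequences have the same rate of growth. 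By iteration, $z_{\F}(K(\lambda))=z_{\F}(K(\lambda^{(l)}))$ for every $l\in\Z$.

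Finally I would reduce to the principal block via translation functors, mirroring Lemma~\ref{L2:atypcomplexityKac}. Since the translation functors used in \cite{GS} implement an equivalence between $\F^{\chi_{\lambda}}$ and $\F^{\chi_{0}}$, and since $z$-complexity is invariant under category equivalences (noted in Subsection~\ref{zcompdef}), the translation functors preserve $z_{\F}(-)$; moreover they send Kac modules to Kac modules. Thus $z_{\F}(K(\lambda))=z_{\F}(K(\mu))$ for some $\mu\in\F^{\chi_{0}}$. By Lemma~\ref{L1:principalblock}, $\mu=0^{(l)}$ for some $l\in\Z$, and combining with the previous step gives $z_{\F}(K(\lambda))=z_{\F}(K(0))=1$. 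The main obstacle is verifying that the translation-functor reduction genuinely preserves the \emph{number} of indecomposable summands in a minimal projective resolution, not merely the dimension; this is exactly the content of $z$-complexity being a categorical invariant under block equivalences, and the same justification invoked in Lemma~\ref{L2:atypcomplexityKac} for complexity applies here verbatim.
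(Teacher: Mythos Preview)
Your proposal is correct and follows essentially the same three-step reduction as the paper's own proof: compute $z_{\F}(K(0))=1$ from the resolution~\eqref{projresK(0)}, show $z_{\F}(K(\lambda))=z_{\F}(K(\lambda^{L}))$ by the analogue of Lemma~\ref{L1:atypcomplexityKac}, and then pass to the principal block via the translation-functor equivalence, invoking the categorical invariance of $z$-complexity noted in Subsection~\ref{zcompdef}. Your write-up is in fact slightly more explicit than the paper's, which simply says ``a similar argument as in Subsection~\ref{compKac}'' for the last step.
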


\begin{proof}
If $\lambda$ is typical, then $K(\lambda)$ is projective, and hence $z_{\F}(K(\lambda))=0$. Assume $\lambda$ is atypical. For $\l=0$, we can use the projective resolution \eqref{projresK(0)} to see that $z_{\F}(K(\lambda))=1.$ Following the same proof as in Lemma~\ref{L1:atypcomplexityKac}, we can show that $z_{\F}(K(\lambda))=z_{\F}(K(\lambda^{L}))$ thus $z_{\F}(K(0^{(l)}))=1$ for any $l\in \Z$ which means that  $z_{\F}(K(\lambda))=1$ if $\lambda\in\F^{\chi_{0}}$. Since the translation functors between the atypical blocks are categorical equivalences and the $z$-complexity is a categorical invariant, then $z_{\F}(K(\lambda))=1$ for any atypical $\lambda$.
\end{proof}

\subsection{Detecting subsuperalgebra}
As we interpreted the complexity of the simple and the Kac modules geometrically, we also find a geometric interpretation of the $z$-complexity. To do so, a detecting subalgebra is introduced. Let $\ff_{\1}\subseteq \fg_{\1}$ be the span of the root vectors $x_\alpha, x_{-\alpha}$ where $\alpha=\varepsilon_1-\delta_1$. In the matrix realization, $x_\alpha=E_{1,3}-E_{n+3,2}$ and $x_{-\alpha}=E_{2,n+3}+E_{3,1}$. Set $\ff_{\0}=[\ff_{\1},\ff_{\1}]$. Then $\ff_{\0}$ is spanned by the diagonal matrix $E_{1,1}-E_{2,2}+E_{3,3}-E_{n+3,n+3}$. We define a three-dimensional subalgebra of $\fg$ by $\ff:=\ff_{\0}\oplus \ff_{\1}$. The Lie superalgebra $\ff$ is classical and so has a support variety theory. Furthermore, as $[\ff_{\0},\ff_{\1}]=0$, it follows that these varieties admit a rank variety description and, in particular, can be identified as subvarieties of $\ff_{\1}$, i.e.,
\begin{equation*}
\fv_{(\ff,\ff_{\0})}(M)=\fv_{\ff_{\1}}^{rank}(M)=\{y\in \ff_{\1}\mid \text{$M$ is not projective as $U(\langle y \rangle)$-module}\}\cup\{0\}.
\end{equation*}
For example, $\fv_{(\ff,\ff_{\0})}(\C)=\ff_{\1}$. If $M$ is projective, then $\fv_{(\ff,\ff_{\0})}(M)=0$.

\begin{prop}\label{t:vardetect}
Let $\lambda \in \x$.
\begin{enumerate}
\item [(1)] If $\lambda$ is typical, then $\fv_{(\ff,\ff_{\0})}(K(\lambda))=\fv_{(\ff,\ff_{\0})}(L(\lambda))=0.$
\item [(2)] If $\lambda$ is atypical, then $\dim \fv_{(\ff,\ff_{\0})}(K(\lambda))=1,$ and $\dim \fv_{(\ff,\ff_{\0})}(L(\lambda))=2.$
\end{enumerate}
\end{prop}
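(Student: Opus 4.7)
The plan is to handle three cases: typical $\lambda$, atypical simple modules (reduced to the trivial module $\C$), and atypical Kac modules (reduced to $K(0)$). For the typical case, $L(\lambda) = K(\lambda) = P(\lambda)$ is projective in $\F_{(\fg,\fg_{\0})}$ by \cite[Theorem~1]{Kac1}; under the detecting subalgebra framework of \cite{BKN3} the restriction remains projective in $\F_{(\ff,\ff_{\0})}$, so both support varieties equal $\{0\}$ by the general criterion recalled just before the proposition.

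For atypical simple modules, the excerpt already records $\fv_{(\ff,\ff_{\0})}(\C) = \ff_{\1}$, of dimension $2$. For arbitrary atypical $\lambda$, I would invoke \cite[Corollary~3.2.2]{kujawa} (used similarly in Theorem~\ref{t2:zcompsimple}) to produce $X \in \F$ such that $\C$ appears as a direct summand of $L(\lambda) \otimes X$. The standard containment $\fv_{(\ff,\ff_{\0})}(M \otimes N) \subseteq \fv_{(\ff,\ff_{\0})}(M)$ combined with $\fv$ of a summand being contained in $\fv$ of the parent gives $\ff_{\1} = \fv_{(\ff,\ff_{\0})}(\C) \subseteq \fv_{(\ff,\ff_{\0})}(L(\lambda))$, and the reverse inclusion is automatic, forcing dimension $2$.

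For atypical Kac modules, I first compute $\fv_{\ff_{\1}}^{rank}(K(0))$ by case analysis on $y = ax_\alpha + bx_{-\alpha} \in \ff_{\1}$. If $a = 0$, $b \neq 0$, then $y \in \fg_{-1}$, and the PBW decomposition $K(0) \cong U(\fg_{-1})$ makes $K(0)$ free over $U(\langle y \rangle)$, so $y$ is excluded. If $a \neq 0$, $b = 0$, then $y \in \fg_1$, and the trivial-summand argument of Proposition~\ref{t2:varKac(2,2)} (extended to $\osp(2|2n)$ as in the proof of Theorem~\ref{t:varkac}) places $y$ in the rank variety. In the mixed case $ab \neq 0$, the subalgebra $\langle y \rangle = \mathrm{span}(y,h)$ with $y^2 = ab\,h \in \ff_{\0}$ is non-abelian; I would establish projectivity of $K(0)$ over $U(\langle y \rangle)$ via the $h$-eigenspace decomposition of $K(0)$: since $h$ is central and acts semisimply, each nonzero eigenspace should decompose into free $U(\langle y \rangle)$-summands. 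This yields $\fv_{(\ff,\ff_{\0})}(K(0)) = \C x_\alpha$, of dimension $1$. To pass to arbitrary atypical $K(\lambda)$, I would imitate Subsection~\ref{compKac}: the 2-step Kac flag long-exact-sequence argument (mirroring Lemma~\ref{L1:atypcomplexityKac}) shows $\dim \fv_{(\ff,\ff_{\0})}(K(-))$ is preserved by $\lambda \mapsto \lambda^L$, and the block equivalence of Lemmas~\ref{L1:principalblock} and \ref{L2:atypcomplexityKac}, whose translation functors preserve support varieties, reduces the general case to $K(0)$.

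The main obstacle will be the mixed case $ab \neq 0$ in the rank variety of $K(0)$. Every earlier rank-variety computation in the paper works over an abelian $\langle y \rangle$ and detects non-projectivity via trivial summands, whereas here $\langle y \rangle$ is a non-abelian 2-dimensional super algebra with $y^2 = ab\,h$, so projectivity over its enveloping algebra requires a careful direct analysis on $K(0) \cong \Lambda(\fg_{-1})$, tracking how the central element $h$ acts on each PBW basis vector and decomposing accordingly.
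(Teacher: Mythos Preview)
Your treatment of the typical case and of atypical simple modules matches the paper's: both reduce $L(\lambda)$ to $\C$ via the tensor-product argument of \cite[Corollary~3.2.2, Theorem~4.1.1]{kujawa} and read off $\fv_{(\ff,\ff_{\0})}(\C)=\ff_{\1}$.

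For atypical Kac modules the two routes diverge. The paper does not single out $K(0)$ at all: it argues, by direct analogy with \cite[Theorems~6.4.1,~9.2.1]{BKN1}, that $\fv_{(\ff,\ff_{\0})}(K(\lambda))=\ff_{\1}\cap\fg_{1}$ for every atypical $\lambda$ in one stroke, so no $L$-operator or translation-functor reduction is needed. Your plan---compute the rank variety of $K(0)$ by hand, then transport via the Kac-flag syzygy and block equivalences---is a legitimate alternative and is more self-contained, but considerably longer. Your reductions are sound: the short exact sequence $0\to K(\lambda)\to P(\lambda^{L})\to K(\lambda^{L})\to 0$ with $\fv_{(\ff,\ff_{\0})}(P(\lambda^{L}))=0$ gives $\fv_{(\ff,\ff_{\0})}(K(\lambda))=\fv_{(\ff,\ff_{\0})}(K(\lambda^{L}))$ by the two-out-of-three property, and the translation-functor equivalences preserve $\ff$-support varieties because tensoring and taking summands can only shrink them while the inverse functor reverses any shrinkage.

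One genuine caution on your mixed case $ab\neq 0$: the superalgebra $\langle y\rangle$ then has $y^{2}=ab\,h\neq 0$, so $U(\langle y\rangle)\cong\C[y]$ is infinite-dimensional and \emph{no} nonzero finite-dimensional module is literally projective over it. The rank-variety identification implicitly uses \emph{relative} projectivity in $\F_{(\langle y\rangle,\langle y\rangle_{\0})}$; with that reading your $h$-eigenspace strategy works. On each nonzero eigenspace $y^{2}$ acts by the nonzero scalar $ab\mu$, so the algebra $\C[y]/(y^{2}-ab\mu)$ is semisimple and every module is projective. The zero eigenspace of $h$ on $K(0)\cong\Lambda(\fg_{-1})$ is only $\operatorname{span}(1,\,x_{-\alpha}\!\cdot\!1)$, and one checks $y\cdot 1=b\,(x_{-\alpha}\!\cdot\!1)$ and $y\cdot(x_{-\alpha}\!\cdot\!1)=0$, a single free $\C[y]/(y^{2})$-module when $b\neq 0$. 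So your obstacle is surmountable, but you should make the relative-projectivity interpretation explicit and carry out the zero-eigenspace check rather than leave it as a heuristic.
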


\begin{proof}
 If $\lambda$ is typical, then $L(\lambda)$ and $K(\lambda)$ are projective and the result follows. Let $\lambda$ be an atypical weight. As argued in \cite[Theorems~6.4.1,~9.2.1]{BKN1} we can show $$\fv_{(\ff,\ff_{\0})}(K(\lambda))=\ff_{\1}\cap \fg_{1},$$
  which implies $\dim \fv_{(\ff,\ff_{\0})}(K(\lambda))=1.$ Since $\l$ and $(0|0,\ldots,0)$ have the same atypicality, \cite[Theorem~4.1.1]{kujawa} implies that
\[\fv_{(\ff,\ff_{\0})}(L(\lambda))=\fv_{(\ff,\ff_{\0})}(L(0|0,\ldots,0))=\ff_{\1},\]
 thus $\dim \fv_{(\ff,\ff_{\0})}(L(\lambda))=2$.
\end{proof}

By Theorems~\ref{t2:zcompsimple}, \ref{t:zcompkac}, and Proposition~\ref{t:vardetect}, the following geometric interpretation of the $z$-complexity holds:
 \begin{thm}\label{ospgeometric2}
 If $X(\lambda)$ is a simple or a Kac module over $\osp(2|2n)$, then
  \[z_{\F}(X(\lambda))=\dim \fv_{(\ff,\ff_{\0})}(X(\lambda)).\]
 \end{thm}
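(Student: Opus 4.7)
The plan is to prove Theorem~\ref{ospgeometric2} by a direct case analysis comparing the numerical values already tabulated for $z_{\F}(X(\lambda))$ and for $\dim \fv_{(\ff,\ff_{\0})}(X(\lambda))$. No new tools are required; the theorem is really an assemblage statement that packages the $z$-complexity computations and the detecting subalgebra computations into a single geometric identity.

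First I would split on the atypicality of $\lambda$. If $\atyp(\lambda)=0$, then both $L(\lambda)$ and $K(\lambda)$ are projective, so $z_{\F}(X(\lambda))=0$ by the typical part of Theorem~\ref{t2:zcompsimple} and Theorem~\ref{t:zcompkac}, while $\fv_{(\ff,\ff_{\0})}(X(\lambda))=0$ follows from the observation (recorded just before Proposition~\ref{t:vardetect}) that projective modules have vanishing $\ff$-support variety. Hence both sides are zero in the typical case.

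Now suppose $\atyp(\lambda)=1$. For $X(\lambda)=L(\lambda)$, Theorem~\ref{t2:zcompsimple}(2) gives $z_{\F}(L(\lambda))=2$, while Proposition~\ref{t:vardetect} gives $\dim \fv_{(\ff,\ff_{\0})}(L(\lambda))=2$, so the two sides agree. For $X(\lambda)=K(\lambda)$, Theorem~\ref{t:zcompkac}(2) yields $z_{\F}(K(\lambda))=1$, and Proposition~\ref{t:vardetect} yields $\dim \fv_{(\ff,\ff_{\0})}(K(\lambda))=1$, so again the two sides match. In either case the required identity $z_{\F}(X(\lambda))=\dim \fv_{(\ff,\ff_{\0})}(X(\lambda))$ holds.

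There is essentially no obstacle at this stage: the theorem is a corollary of results already proved. The conceptual weight lies entirely upstream, in Proposition~\ref{t:vardetect} (which relies on the rank-variety description of $\fv_{(\ff,\ff_{\0})}$ and on the invariance of support varieties under the atypicality-preserving mechanism of \cite[Theorem~4.1.1]{kujawa}) and in Theorems~\ref{t2:zcompsimple} and \ref{t:zcompkac} (which depend on the explicit minimal projective resolutions of Section~\ref{s:computecomplexity} and on the tensor-product estimates leading to Lemma~\ref{t:zcomp}). The only care required in writing the proof is to make sure both the simple and Kac cases, and the typical and atypical subcases, are addressed explicitly so that the statement holds uniformly for every $X(\lambda)$ in the indicated class.
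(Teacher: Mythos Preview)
Your proposal is correct and matches the paper's approach exactly: the paper simply notes that Theorem~\ref{t2:zcompsimple}, Theorem~\ref{t:zcompkac}, and Proposition~\ref{t:vardetect} together imply Theorem~\ref{ospgeometric2}, without spelling out the case analysis. Your write-up is in fact more explicit than the paper's, which offers no proof beyond the one-line citation of those three results.
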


\section{Additional examples}\label{s:additionalexamples}
In this section we assume that $\fg$ is either $\osp(3|2)$, $\dd$ for some $\a\in\C\setminus\{0,-1\}$, $G(3)$, or $F(4)$. In these cases, $\fg_{\0}$ is semisimple and hence our category $\F$ is the category of finite-dimensional supermodules. In this section we refer to \cite{Germoni} and \cite{Mart} to describe the atypical blocks over these Lie superalgebras. We will also use the notation developed in \cite{Germoni} for $\osp(3|2)$, $\dd$, $G(3)$ and the notation in \cite{Mart} for $F(4)$. However, in this section, $\N$ indicates the set of positive integers.

Germoni \cite{Germoni} denotes by $\fg$-mod the category of finite-dimensional $\fg$-modules with even morphisms of representations. In \cite[Lemma~1.1.1]{Germoni}, the category $\gmod$ is shown to contain enough projective modules where projective and injective modules coincide. However, we will be considering the category $\F$ of finite-dimensional $\fg$-modules with all morphisms. The choice of even morphisms has no effect on the radical layer structure of the projective indecomposable modules. We first provide the necessary background on these Lie superalgebras and we then compute the complexity and the $z$-complexity of the atypical simple modules. The typical ones are projective, and hence they have zero complexity and $z$-complexity. We also interpret these complexities geometrically as before.

\subsection{Case I: $\osp(3|2)$}

Let $\fg=\osp(3|2)$ (see \cite{Kac}). The even part of $\fg$ is $\fg_{\0} \cong
\mathfrak{so}(3)\oplus\mathfrak{sp}(2)\cong \s_2\oplus\s_2$,
hence $\dim\fh^*=2$ with a basis $\{\eps,\d\}$. The odd part $\fg_{\1}$ is the $6$-dimensional $\fg_{\0}$ module $\C^3\boxtimes (\C^2)^*$ where $\C^3$ is the natural $\mathfrak{so}(3)$-module and $(\C^2)^*$ is the right $\mathfrak{sp}(2)$-module of row vectors.

The atypical dominant weights are denoted by $\l_0=(0,0)$ and $\l_l=(l-1,l)$ for $l\in \N$. The trivial module $L(\l_0)$ will be denoted by $\C$. According to (\cite[Theorem~2.1.1]{Germoni}), the principal block $\Ga_0=\{\l_l\tq l\geq 0\}$
is the unique atypical block.

Let $P(\mu)$ be the projective cover of $L(\mu)$. As a $\fg_{\0}$-module, $L(\mu)$ contains a simple $\fg_{\0}$-module $L_{\0}(\mu)$ as a composition factor. Using the discussion in \cite[Subsection~5.1]{BKN1}, we have the following bounds:
\begin{equation}\label{eq1:Bdprojcover}
\dim L_{\0}(\mu)\leq \dim P(\mu)\leq 2^{\dim \fg_{\1}}\dim L_{\0}(\mu),
\end{equation}
Note that $L_{\0}(\l_0)$ is the one-dimensional trivial $\fg_{\0}$-module. For $l\in \N$, $L_{\0}(\l_l)=V_{l-1}\boxtimes V_l$ where $V_m$ denotes the simple $\s_2$-module of dimension $m+1$. Thus, for $l\in \N$,
\begin{equation}\label{eq1':Bdprojcover}
l(l+1)\leq \dim P(\l_l)\leq 2^6 l(l+1).
\end{equation}

\subsection{Case II: $\dd$} \label{D}
Let $\alpha \in\C\setminus\{0,-1\}$, and let $\fg$ be the basic classical Lie superalgebra
$\dd$  (see \cite{Kac}). The even part of $\fg$ is $\fg_{\0}\cong \s_2\oplus\s_2\oplus \s_2$. The odd part of $\fg$ is $\fg_{\1}\cong \C^2\boxtimes \C^2\boxtimes \C^2$ where $\C^2$ is the $2$-dimensional natural $\s_2$-module. 

Atypical dominant weights are described in \cite[Subsection~3.1]{Germoni}. If $\a\not\in\Q$, the only atypical dominant weights are $\l_0=(0,0,0)$ (corresponding to the trivial module) and $\l_l=(l+1,l-1,l-1)$ for $l\in \N$. Now assume $\a\in\Q$. For $k\in \N$ and $l\in \Z$, let $\l_{k,l}$ be the atypical weight defined in \cite[Subsection~3.1]{Germoni}. Set $\Ga_k=\{\l_{k,l}\tq l\in\Z\}$. According to \cite[Theorem~3.1.1]{Germoni}, the principal block in both cases is $\Ga_0=\{\l_{l}\tq l\geq 0\}$ and it is equivalent to the principal block of $\osp(3|2)$. Moreover, the projective covers of the simple modules in the principal block of $\dd$ have the same radical layer structure as those over $\osp(3|2)$. In addition, if $\alpha\notin \Q$, then $\Ga_0$ is the unique atypical block. If $\alpha\in \Q$, any atypical block is one of $\Ga_k$, $k\geq 0$. 

 Let $L_{\0}(\mu)$ be the simple $\fg_{\0}$-module of highest weight $\mu$. Note that $L_{\0}(\l_0)$ is the trivial $\fg_{\0}$-module and for $\l_l=(l+1,l-l,l-1)$, $L_{\0}(\l_l)=V_{l+1}\boxtimes V_{l-1}\boxtimes V_{l-1}$ where $V_m$ denotes the simple $\s_2$-module of dimension $m+1$. Then by \eqref{eq1:Bdprojcover} we have for $l\in \N$:
\begin{equation}\label{eq1:DBdprojcover}
(l+2)(l^2)\leq \dim P(\l_l)\leq 2^8 (l+2)(l^2).
\end{equation}

\subsection{Case III: $G(3)$}\label{G}
Let $\fg$ be the basic classical Lie superalgebra $G(3)$ (see \cite{Kac}). The even part of $\fg$ is~:
$\fg_{\0}\cong \s_2\oplus G_2$ where $G_2\subsetneqq \mathfrak{so}_7$ is the exceptional Lie algebra. The odd part of $\fg$ is $\fg_{\1}\cong \C^2\boxtimes \C^7$ where $\C^2$ is the $2$-dimensional natural $\s_2$-module and $\C^7$ is the $7$-dimensional non-trivial $G_2$-module of minimal dimension. 

We identify the set of dominant weights for $\fg_{\0}$ with
nonnegative integer combinations of the fundamental
weights $(\d;\w_1=\e_1+\e_2;\w_2=\e_1+2\e_2)$. For $\nu=m_1\omega_1+m_2\omega_2=(m_1,m_2)$, the dimension of the simple $G_2$-module $L_{G_2}(\nu)$ can be computed by the Weyl-dimension formula. Note that the fundamental weights given in \cite[Section~24.3]{HUM} are $\l_1:=\omega_2,\l_2=:\omega_1$. The formula given in \cite[Section~24.3]{HUM} is being adjusted to this setting:
\begin{equation}\label{Gdimformula}
\dim L_{G_2}(\nu)=\frac{1}{5!}(m_2+1)(m_1+1)(m_2+m_1+2)(m_2+2m_1+3)(m_2+3m_1+4)(2m_2+3m_1+5).
\end{equation}

For $k$ and $l$ nonnegative integers, let $\l_{k,l}$ be the atypical weight defined in \cite[Subsection~4.1]{Germoni}. Set $\Ga_k=\{\l_{k,l}\tq l\geq 0\}$.  Note that $\l_{0,0}=(0,0,0)$, $\l_{0,l}=(l+4,l-1,0)$ for $l\in \N$, and $\Ga_0$ is the principal block. By \cite[Theorem~4.1.1]{Germoni}, every atypical block of $\fg$ is one of the $\Ga_k$ which is equivalent to the principal block of $\osp(3|2)$. Moreover, the projective covers of the simple modules in $\Ga_k$ have the same radical layer structure as those in the principal block of $\osp(3|2)$.

Let $L_{\0}(\mu)$ be the simple $\fg_{\0}$-module of highest weight $\mu$. Note that $L_{\0}(\l_{0,0})$ is the trivial $\fg_{\0}$-module and, for $l \in \N$, $L_{\0}(\l_{0,l})=V_{l+4}\boxtimes L_{G_2}(l-1,0)$ where $V_{l+4}$ denotes the simple $\s_2$-module of dimension $l+5$ and $L_{G_2}(l-1,0)$ is the simple $G_2$-module of dimension given by \eqref{Gdimformula}. Then by \eqref{eq1:Bdprojcover} we have for $l\in \N$:
\begin{equation}\label{eq1:GBdprojcover}
\frac{1}{5!}(l+5)(l)(l+1)(2l+1)(3l+1)(3l+2)\leq \dim P(\l_l)\leq  \frac{2^{14}}{5!}(l+5)(l)(l+1)(2l+1)(3l+1)(3l+2).
\end{equation}

\subsection{Case IV: $F(4)$}\label{F}
Let $\fg$ be the basic classical Lie superalgebra $F(4)$ (see \cite{Kac}). The even part of $\fg$ is $\fg_{\0}\cong \mathfrak{so}_7\oplus \s_2$. The odd part of $\fg$ is $\fg_{\1}\cong \mathfrak{spin}_7\boxtimes \C^2$ where $\C^2$ is the $2$-dimensional natural $\s_2$-module and $\mathfrak{spin}_7$ is the $8$-dimensional spin representation of $\mathfrak{so}_7$.

The set of dominant weights and the atypical blocks are described in \cite{Mart}. The fundamental weights are $\omega_1=\e_1,\,\omega_2=\e_1+\e_2,\,\omega_3=(1/2)(\e_1+\e_2+\e_3),$ and $\omega_4=(1/2)\delta$. For $\nu=m_1\omega_1+m_2\omega_2+m_3\omega_3$, the dimension of the simple $\mathfrak{so}_7$-module, $L_{\mathfrak{so}_7}(\nu)$, can be computed by the Weyl-dimension formula \cite[Section~24.3]{HUM}:
\begin{multline}\label{Fdimformula}
\dim L_{\mathfrak{so}_7}(\nu)=\frac{1}{720}(m_1+1)(m_2+1)(m_3+1)(m_1+m_2+2)(m_2+m_3+2)\\
(2m_2+m_3+3)(m_1+m_2+m_3+3)(m_1+2m_2+m_3+4)(2m_1+2m_2+m_3+5).
\end{multline}

As described in \cite{Mart}, the block containing the trivial module is denoted by $F^{(1,1)}$. From \cite[Lemma~7.1]{Mart}, 
\[F^{(1,1)}=\{\l _1=(0,0,0|0),\l _2=(0,0,0|3), \l_l=(1/2)(l+1,l+1,l+1|l+7)\,\forall\, l\neq 1,2\},\]
where the components are the coefficients of $\e_1,\e_2,\e_3,\d$ respectively. Thus for $l\neq 1,2$, $\l_l= (l+1)\omega_3+(l+7)\omega_4$. 

Let $L_{\0}(\mu)$ be the simple $\fg_{\0}$-module of highest weight $\mu$. Note that $L_{\0}(\l_1)$ is the trivial $\fg_{\0}$-module and for $l \neq 1,2$, $L_{\0}(\l_{l})=L_{\mathfrak{so}_7}((l+1)\omega_3)\boxtimes V_{l+7}$ where $V_{l+7}$ denotes the simple $\s_2$-module of dimension $l+8$ and $L_{\mathfrak{so}_7}((l+1)\omega_3)$ is the simple $\mathfrak{so}_7$-module of dimension given by \eqref{Fdimformula}. Then by \eqref{eq1:Bdprojcover} we have for $l\neq 1,2$:
\begin{equation}\label{eq1:FBdprojcover}
\frac{1}{360}(l+2)(l+3)(l+4)^2(l+5)(l+6)(l+8)\leq \dim P(\l_l)\leq  \frac{2^{16}}{360}(l+2)(l+3)(l+4)^2(l+5)(l+6)(l+8).
\end{equation}

\subsection{Complexity of simple modules}
We give a minimal projective resolution of the trivial module $\C$ to compute its complexity, then we use the generalized Kac-Wakimoto conjecture to show that any atypical simple module will have the same complexity as the trivial module.
\begin{thm}\label{osp(3|2)DGFsimple}
Let $S$ be an atypical simple $\fg$-module. We have $c_{\F}(S)=4,5,8,9$ if $\fg=\osp(3|2),\dd,G(3),F(4)$ respectively.
\iffalse
\begin{cases}
4 \quad \text{if $\fg=\osp(3|2)$}\\
5 \quad \text{if $\fg=\dd$}\\
8 \quad \text{if $\fg=G(3)$}\\
9 \quad \text{if $\fg=F(4)$}.
\end{cases}
\fi
\end{thm}

\begin{proof}
Let $\fg=\osp(3|2)$. Using the radical layer structure of the projective indecomposable modules given in \cite[Theorem~2.1.1]{Germoni} and following the same diagrammatic techniques used in proving Theorem~\ref{t2:atypcomplexitysimple}, the minimal projective resolution of the trivial module is given by:
\begin{align}\label{projres1}
 \cdots \rightarrow P_d\rightarrow \cdots \rightarrow P_0=P(\l_0)\rightarrow \C \rightarrow 0,
\end{align}
where the $d$th term,  $d\geq 1$, in this resolution is given by:
\[P_d=\begin{cases}
 P(\l_{d+1})\oplus P(\l_{d-1})\oplus \cdots \oplus P(\l_2)\quad \text{if $d$ is odd,}\\
 P(\l_{d+1})\oplus P(\l_{d-1})\oplus \cdots \oplus P(\l_3)\oplus P(\l_0)\quad \text{if $d\equiv 0 \mod 4$,}\\
 P(\l_{d+1})\oplus P(\l_{d-1})\oplus \cdots \oplus P(\l_3)\oplus P(\l_1)\quad \text{if $d\equiv 2 \mod 4$.}
    \end{cases}\]
Then, by \eqref{eq1':Bdprojcover}, we have for all $d\geq 1$:
\begin{equation*}
C'\cdot d^3\leq \dim P_d \leq C\cdot d^3,
\end{equation*}
for some positive constants $C,\, C'$.  This shows that $c_{\F}(\C)=4$ over $\osp(3|2)$. 

Let $\fg=\dd$ or $G(3)$. Since the principal block $\Ga_0$ in both cases is equivalent to the principal block of $\osp(3|2)$ and the projective covers have the same structures in both cases, we will have the same minimal projective resolution \eqref{projres1} for the trivial module $\C$. Using the bounds in equations~\eqref{eq1:DBdprojcover} and \eqref{eq1:GBdprojcover}, we have for all $d\geq 1$:
\begin{align*}
	D'\cdot d^4\leq \dim P_d\leq D\cdot d^4,\quad \text{if $\fg=\dd$}\\
	G'\cdot d^7\leq \dim P_d\leq G\cdot d^7, \quad \text{if $\fg=G(3)$}
\end{align*}
for some positive constants $D,\, D',\,G,\, G'$. This shows that $c_{\F}(\C)=5$ over $\dd$ and  $c_{\F}(\C)=8$ over $G(3)$.

Let $\fg=F(4)$. The description of the projective indecomposable modules over $F(4)$ given in \cite[Lemma~11.1]{Mart} is the same as the one given in \cite{Germoni} over $G(3)$, except for the difference in notation.  This similarity means that the projective resolution \eqref{projres1} of the trivial module over $\osp(3|2)$ will carry over to $F(4)$. Using the bounds in ~\eqref{eq1:FBdprojcover}  we have for $d\geq 1$:
	\begin{align*}
	F'\cdot d^8\leq \dim P_d\leq F\cdot d^8,
	\end{align*}
	for some positive constants $F,\, F'$. Thus $c_{\F}(\C)=9$ over $F(4)$.

By \cite[Theorem~4.1.1]{kujawa}, all simple $\osp(3|2)$-modules of the same atypicality have the same complexity. Thus the complexity of all atypical simple $\osp(3|2)$-modules is $4$.
On the other hand, by \cite[Example~3.4]{KW}, we see that the Kac-Wakimoto conjecture holds for simple modules over $\dd,\,G(3),\,F(4)$. Since this conjecture holds, then \cite[Theorem~4.1.1]{kujawa} holds over these Lie superalgebras. Thus all atypical simple modules over these Lie superalgebras have the same complexity as the trivial module. The result follows.
\end{proof}

\subsection{Geometric interpretation of the complexity}
Using support and associated varieties, we interpret the above complexities geometrically:

\begin{thm}\label{GMosp(3|2)}
		Let $\fg=\osp(3|2),\dd,\,G(3),$ or $F(4)$. If $S$ is a simple $\fg$-module, then
	$$c_{\F}(S)=\dim \X_{S}+\dim \fv_{(\fg,\fg_{\0})}(S).$$
\end{thm}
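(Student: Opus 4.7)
The plan is to reduce the equation to a case-by-case numerical verification across the four Lie superalgebras, separating the typical and atypical regimes as was done in Theorem~\ref{ospgeometric1} for $\osp(2|2n)$. The typical case is essentially free: if $\atyp(\lambda)=0$, then $L(\lambda)$ is projective by \cite[Theorem~1]{Kac1}, whence $c_{\F}(L(\lambda))=0$, while projectivity forces $\dim\X_{L(\lambda)}=0$ (via \cite[Theorem~3.4]{DS}) and $\dim\fv_{(\fg,\fg_{\0})}(L(\lambda))=0$ (via \cite[Corollary~2.7.1]{BKN2} or directly from the definition of the support variety). Both sides of the claimed formula vanish.

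For the atypical case, I would handle the two variety dimensions independently. On the support variety side, for each of the four Lie superalgebras the atypicality of any atypical $\lambda$ is one, and the result $\dim\fv_{(\fg,\fg_{\0})}(L(\lambda))=\atyp(\lambda)=1$ follows from \cite[Corollary~4.4.2]{kujawa}. On the associated variety side, \cite[Corollary~2.5]{S} gives $\X_{L(\lambda)}=\X$ whenever $\lambda$ is atypical, so the task reduces to computing $\dim\X$ for $\fg=\osp(3|2),\,\dd,\,G(3),\,F(4)$. These dimensions are known from the explicit description of $\X$ in \cite[Section~4]{DS} using the structure of $\fg_{\1}$ as a $\fg_{\0}$-module; specifically, one computes $\dim\X=3,\,4,\,7,\,8$ respectively. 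Combined with the complexity values $4,\,5,\,8,\,9$ computed in Theorems~\ref{osp(3|2)simple}, \ref{Dsimple}, \ref{Gsimple}, and \ref{Fsimple}, the identity $c_{\F}(L(\lambda))=\dim\X+1$ holds in each case.

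The main obstacle will be justifying the dimensions of $\X$ uniformly for the exceptional superalgebras $G(3)$ and $F(4)$, where the $\fg_{\0}$-module structure on $\fg_{\1}$ is less transparent than in the classical series. I would address this by invoking the $G_{\0}$-orbit description of $\X$: since $\fg$ has defect one, $\X$ has a unique open $G_{\0}$-orbit whose dimension equals $\dim\fg_{\1}-\dim\fg_{\0}^{x}$ for a self-commuting $x$ representing the orbit, and these numerics are tabulated (or directly computable) using the standard realizations of $G(3)$ and $F(4)$ given in \cite{Kac1,Mart}. Once these four dimensions are recorded, the theorem follows by a one-line arithmetic check in each case, assembled in parallel with the typical case above.
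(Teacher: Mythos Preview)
Your proposal is correct and follows essentially the same approach as the paper: split into typical (both sides vanish by projectivity) and atypical cases, invoke \cite[Corollary~4.4.2]{kujawa} for $\dim\fv_{(\fg,\fg_{\0})}(S)=1$, invoke \cite[Corollary~2.5]{S} for $\X_S=\X$, read off $\dim\X=3,4,7,8$ from \cite{DS}, and match against the complexity values computed earlier. The only difference is cosmetic: the paper cites \cite[Theorem~4.5, Corollary~4.8]{DS} directly for the values of $\dim\X$, so the orbit-dimension computation you flag as a potential obstacle is unnecessary.
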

\begin{proof}
	If $S$ is typical then S is projective and both sides are zero. Let $S$ be an atypical simple module. By \cite[Corollary~2.5]{S}, we have $\X_{S}=\X$ by the stratification described in \cite[Section~2]{S}. Using \cite[Theorem~4.5, Corollary~4.8]{DS},
	$\dim \X=3,4,7,8$ if $\fg=\osp(3|2),\dd,G(3),F(4)$ respectively.
	\iffalse
	\begin{cases}
	3 \quad \text{if $\fg=\osp(3|2)$}\\
	4 \quad \text{if $\fg=\dd$}\\
	7 \quad \text{if $\fg=G(3)$}\\
	8 \quad \text{if $\fg=F(4)$}.
	\end{cases}
	\fi
	On the other hand, by \cite[Corollary~4.4.2]{kujawa} we have $\dim \fv_{(\fg,\fg_{\0})}(S)=\atyp(S)=1$. The result then follows from Theorem~\ref{osp(3|2)DGFsimple}.
\end{proof}

\subsection{$z$-complexity of simple modules}
 We start by computing the $z$-complexity of the trivial module over $\osp(3|2),\,\dd,\,G(3)$, and $F(4)$:
 \begin{prop}
 	$z_{\F}(\C)=2$.
 \end{prop}
 \begin{proof}
 	Using the resolution in the proof of Theorem~\ref{osp(3|2)DGFsimple}, the number of summands in $P_d$ is either $d/2+1$ if $d$ is even or $(d+1)/2$ if $d$ is odd. The rate of growth of this number is $2$. The result follows.
 \end{proof}
We then compute the $z$-complexity of the other simple $\osp(3|2)$-modules in the unique atypical block $\Gamma_0$: 
 \begin{thm}
 	Let $\fg=\osp(3|2)$. For $l\geq 1$, we have $z_{\F}(L(\l_l))=2$.
 \end{thm}
 
 \begin{proof}
 Since the projective cover of $L(\l_1)$ has the same structure as the projective cover of $L(\l_0)=\C$, one can write an analogous minimal projective resolution to show that $z_{\F}(L(\l_1))=2$. The minimal projective resolution of $L(\l_2)$ is given by:
 \begin{align}
 \cdots \rightarrow P_d\rightarrow \cdots \rightarrow P_0=P(\l_2)\rightarrow L(\l_2) \rightarrow 0,
 \end{align}
 where the $d$th term in this resolution is given by:
 \[P_d=\begin{cases}
 P(\l_{d+2})\oplus 2P(\l_{d})\oplus \cdots\oplus 2P(\l_2) \quad \text{if $d\geq 2$ is even,}\\
 P(\l_{d+2})\oplus 2P(\l_{d})\oplus \cdots\oplus 2P(\l_3)\oplus P(\l_1)\oplus P(\l_0)  \quad \text{if $d\geq 1$ is odd.}\\
 \end{cases}\]
 In the above, $2P(\l)$ means $P(\l)\oplus P(\l)$. Thus the number of direct summands in $P_d$ is either $d+1$ if $d\geq 2$ is even or it is $d+2$ if $d\geq 1$ is odd. This shows that $z_{\F}(L(\l_l))=2$.
 	
 The $d$th term in the minimal projective resolution of $L(\l_3)$ is given by:
\[P_d=\begin{cases}
 P(\l_{d+3})\oplus P(\l_{d+1})\quad \text{if $d=1$,}\\
 P(\l_{d+3})\oplus P(\l_{d+1})\oplus P(\l_1)\oplus P(\l_0) \quad \text{if $d=2$,}\\
 P(\l_{d+3})\oplus P(\l_{d+1})\oplus 2P(\l_{d-1})\oplus \cdots\oplus 2P(\l_2) \quad \text{if $d\geq 3$ is odd,}\\
 P(\l_{d+3})\oplus P(\l_{d+1})\oplus 2P(\l_{d-1})\oplus \cdots\oplus 2P(\l_3)\oplus P(\l_1)\oplus P(\l_0)  \quad \text{if $d\geq 4$ is even.}\\
 \end{cases}\]
 Thus the number of summands in $P_d$ is either $d+2$ if $d\geq 2$ is even or it is $d+1$ if $d\geq 1$ is odd. This shows that $z_{\F}(L(\l_3))=2$.
 	
 For $l\geq 3$, the projective cover of $L(\l_l)$ has the same structure as the projective cover of $L(\l_3)$. This gives  $z_{\F}(L(\l_l))=2.$
 \end{proof}

\begin{thm}
	Let $\fg=\dd,\,G(3),$ or $F(4)$. If $S$ is an atypical $\fg$-simple module then  $z_{\F}(S)=2$.
\end{thm}
\begin{proof}
Let $\fg=\dd$. The $z$-complexity is a categorical invariant, thus using the equivalence between the principal blocks of $\osp(3|2)$ and $\dd$ we have $z_{\F}(S)=2$ if $S$ is an atypical simple $\dd$-module in $\Ga_0$. For the simple modules in the other atypical blocks $\Ga_k$, we can compute the $z$-complexity by writing an explicit minimal projective resolution. By \cite[Theorem~3.1.1]{Germoni}, the projective cover $P(\l_{k,l})$ of the simple module $L(\l_{k,l})$ has the radical structure:
\[\begin{tikzpicture}

\node (n1) at (1,1) {$L(\l_{k,l})$};
\node (n2) at (0,0)  {$L(\l_{k,l-1})$};
\node (n3) at (2,0)  {$L(\l_{k,l+1})$};
\node (n4) at (1,-1) {$L(\l_{k,l})$};

\foreach \from/\to in {n1/n2,n1/n3,n2/n4,n3/n4}
\draw (\from) -- (\to);

\end{tikzpicture}\]
 The $d$th term in the minimal projective resolution of $L(\l_{k,l})$ is given by:
\[P_d=\begin{cases}
  P(\l_{k,l\pm d})\oplus P(\l_{k,l\pm (d-2)})\oplus \cdots\oplus P(\l_{k,l\pm 1}) \quad \text{if $d\geq 1$ is odd,}\\
  P(\l_{k,l\pm d})\oplus P(\l_{k,l\pm (d-2)})\oplus \cdots\oplus P(\l_{k,l})  \quad \text{if $d\geq 2$ is even.}\\
  \end{cases}\]
In the above, $P(\l_{k,l\pm i})$ means $P(\l_{k,l+i})\oplus P(\l_{k,l-i})$. Thus the number of direct summands in $P_d$ is $d+1$. This shows that $z_{\F}(L(\l_{k,l}))=2$.

Let $\fg=G(3)$. By \cite[Theorem~4.1.1]{Germoni}, every atypical block $\Ga_k$ is equivalent to the principal block of $\osp(3|2)$. The result follows in this case since the $z$-complexity is a categorical invariant. 

Let $\fg=F(4)$. The radical layer structure of the projective indecomposable modules over $F(4)$ is the same as that over $G(3)$. This will give the same projective resolutions over $F(4)$. Thus the result follows in this case.
\end{proof}

\subsection{Geometric interpretation of the $z$-complexity}
We give an explicit detecting subsuperalgebra that will be used to interpret the $z$-complexity geometrically. Let $\ff_{\1}\subseteq \fg_{\1}$ be the span of the root vectors $x_\alpha, x_{-\alpha}$ where
 \[\alpha=\begin{cases}
  \varepsilon_1+\delta \quad \text{if $\fg=\osp(3|2)$}\\
  \varepsilon_1+\varepsilon_2+\varepsilon_3\quad \text{if $\fg=\dd$}\\
  \varepsilon_3+\delta \quad \text{if $\fg=G(3)$}\\
  \frac{1}{2}(\e_1+\e_2+\e_3+\delta) \quad \text{if $\fg=F(4)$}.
 \end{cases}\]
 
   Set $\ff_{\0}=[\ff_{\1},\ff_{\1}]$. We define a three-dimensional subalgebra of $\fg$ by
$\ff:=\ff_{\0}\oplus \ff_{\1}$. The Lie superalgebra $\ff$ is classical and so has a support variety theory. Furthermore, as $[\ff_{\0},\ff_{\1}]=0$, it follows that these varieties admit a rank variety description and, in particular, can be identified as subvarieties of $\ff_{\1}$, i.e.,
\begin{align*}
\fv_{(\ff,\ff_{\0})}(M)=\fv_{\ff_{\1}}^{rank}(M)=\{y\in \ff_{\1}\mid \text{$M$ is not projective as $U(\langle y \rangle)$-module}\}\cup\{0\}.
\end{align*}
Using this detecting subsuperalgebra, we have the following geometric interpretation of the $z$-complexity:
\begin{thm}
If $S$ is a simple module over $\osp(3|2),\,\dd,\, G(3),$ or $F(4)$, then
$$z_{\F}(S)=\dim \fv_{(\ff,\ff_{\0})}(S).$$
\end{thm}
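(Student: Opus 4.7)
The plan is to split the argument into typical and atypical cases and reduce the atypical case to a computation on the trivial module, mirroring the strategy used for $\osp(2|2n)$ in Proposition~\ref{t:vardetect}.

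For typical $S$, the module is projective in $\F$, so $z_{\F}(S)=0$ from its trivial minimal projective resolution, and $\fv_{(\ff,\ff_{\0})}(S)$ reduces to the origin (projective $\fg$-modules restrict to projective $\ff$-modules by Frobenius reciprocity, and projective modules have trivial support varieties by the analogue of \cite[Corollary~2.7.1]{BKN2} at the level of $\ff$). Both sides of the claimed equality are therefore zero.

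For atypical $S$, the propositions in this section have already established $z_{\F}(S)=2$ in each of the four settings. Since $\fv_{(\ff,\ff_{\0})}(S)\subseteq \ff_{\1}$ via the rank variety description and $\dim \ff_{\1}=2$ by the explicit choice of the root vectors $x_{\a}, x_{-\a}$, it suffices to show $\fv_{(\ff,\ff_{\0})}(S)=\ff_{\1}$. I would proceed in two steps. First, invoke \cite[Theorem~4.1.1]{kujawa} to conclude that all simple modules of the same atypicality share the same support variety with respect to the relative pair $(\ff,\ff_{\0})$; since the defect of each $\fg$ in question is $1$, this collapses the problem for atypical $S$ to the same question for the trivial module $\C$. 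Second, observe that $\fv_{(\ff,\ff_{\0})}(\C)=\ff_{\1}$, as already recorded immediately before the statement, because the trivial module fails to be projective over $U(\langle y\rangle)$ for every nonzero $y\in \ff_{\1}$ (a direct check using $[y,y]=0$ and $\dim \C=1$). Chaining these two identifications yields $\dim \fv_{(\ff,\ff_{\0})}(S)=2=z_{\F}(S)$.

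The main obstacle is justifying the application of \cite[Theorem~4.1.1]{kujawa} to the relative pair $(\ff,\ff_{\0})$ in each of the four \emph{Type II} settings. The hypothesis there is the Kac--Wakimoto conjecture for simple modules over $\fg$, which has already been verified for $\osp(3|2),\,\dd,\,G(3)$, and $F(4)$ in Theorems~\ref{osp(3|2)simple}, \ref{Dsimple}, \ref{Gsimple}, and \ref{Fsimple} via \cite[Example~3.4]{KW}. A secondary bookkeeping step is to confirm that for each specified $\a$ the subalgebra $\ff$ is classical with $[\ff_{\0},\ff_{\1}]=0$, so that the rank variety description is available; this is an immediate computation from the explicit root data listed before the theorem, noting in each case that $2\a\notin \Phi$, so the bracket $[x_{\a},x_{-\a}]\in \ff_{\0}$ acts as zero on $\ff_{\1}$.
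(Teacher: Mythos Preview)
Your proposal is correct and follows essentially the same route as the paper: split into typical and atypical cases, handle the typical case by projectivity, and in the atypical case use \cite[Theorem~4.1.1]{kujawa} (whose hypothesis is the Kac--Wakimoto conjecture, verified via \cite[Example~3.4]{KW}) to reduce the $\ff$-support variety of $S$ to that of $\C$, which equals $\ff_{\1}$. Your write-up is in fact slightly more explicit than the paper's in spelling out why \cite[Theorem~4.1.1]{kujawa} applies and in checking the structure of $\ff$; the only minor quibble is that the vanishing $[\ff_{\0},\ff_{\1}]=0$ is really a consequence of $\alpha$ being isotropic (so $(\alpha,\alpha)=0$) rather than directly of $2\alpha\notin\Phi$, though for odd roots in basic classical superalgebras these conditions are equivalent.
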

\begin{proof}
Note that $\fv_{(\ff,\ff_{\0})}(\C)=\ff_{\1}$. Thus $\dim \fv_{(\ff,\ff_{\0})}(\C)=z_{\F}(\C)=2$. Moreover, \cite[Theorem~4.1.1]{kujawa} implies that for any atypical simple module $S$, we have $\fv_{(\ff,\ff_{\0})}(S)=\fv_{(\ff,\ff_{\0})}(\C)$ since $S$ and $\C$ have the same atypicality. The result follows for atypical simple modules. If $S$ is typical, then it is projective, hence $\dim \fv_{(\ff,\ff_{\0})}(S)=z_{\F}(S)=0.$
\end{proof}

\end{document}